\tikzset{
  symbol/.style={
    draw=none,
    every to/.append style={
      edge node={node [sloped, allow upside down, auto=false]{$#1$}}}
  }
}
\DeclareMathAlphabet{\mathbbmsl}{U}{bbm}{m}{sl}
\title{Hilbert Scheme of A Pair of Skew Lines on Cubic Threefolds}
\newcommand{\C}{\mathbb{C}} 
\newcommand{\Z}{\mathbb{Z}}
\newcommand{\BP}{\mathbb{P}}
\newcommand{\Bl}{\textup{Bl}}
\newcommand{\Sym}{\textup{Sym}}
\newtheorem{theorem}{Theorem}[section]
\newtheorem{definition}[theorem]{Definition}
\newtheorem{proposition}[theorem]{Proposition}
\newtheorem{corollary}[theorem]{Corollary}
\newtheorem{question}[theorem]{Question}
\newtheorem{example}[theorem]{Example}
\newtheorem{lemma}[theorem]{Lemma}
\newtheorem{conjecture}[theorem]{Conjecture}
\newtheorem{problem}[theorem]{Problem}
\newtheorem{remark}[theorem]{Remark}
\newtheorem{thm}{Theorem}
\author{Yilong Zhang}
\address{Department of Mathematics\\
Purdue University\\
  150 N University St, West Lafayette, IN 47907, USA}
\email{zhan4740@purdue.edu}
\date{Feb 27, 2025}
\subjclass[2010]{14C05, 14J30 primary}
\keywords{Hilbert scheme, cubic threefold, a pair of skew lines, ADE singularities, Bridgeland moduli space}
\begin{document}
\maketitle
\begin{abstract}
Two general lines on a smooth cubic threefold $X$ are disjoint and determine an irreducible component of the Hilbert scheme of $X$. We prove that this component is smooth and isomorphic to the Hilbert scheme of two points of the Fano varieties of lines of $X$. We also study its relation to the geometry of lines and singularities on the hyperplane sections of $X$ and its relation to Bridgeland moduli spaces.
\end{abstract}

\section{introduction}
Let $X\subseteq \mathbb P^4$ be a smooth cubic threefold. Let $L_1, L_2$ be a pair of skew lines on $X$, i.e., $L_1\cap L_2=\emptyset$, then they span a hyperplane in $\mathbb P^4$. Let $F$ be the parameter space of lines in $X$. Then there is a rational map
\begin{equation}\label{eqn_Intro_rationalmap}
    \Phi: F\times F\dashrightarrow (\mathbb P^4)^*,\ (L_1,L_2)\mapsto \textup{Span}(L_1,L_2).
\end{equation}

This map is generically finite since when the two lines are general, the hyperplane section $X\cap \textup{Span}(L_1,L_2)$ is a cubic surface and contains 27 lines.

Clemens and Griffiths \cite[Lem. 12.16]{CG} showed that this map is still regular when the two lines intersect at one point $L_1\cap L_2=\{p\}$. In this case, $\Phi$ associates the tangent hyperplane $T_pX$ at $p$. They also analyzed the limiting hyperplane for a one-parameter family through the diagonal, where two lines coincide \cite[Lem. 12.17]{CG}.

In fact, as a consequence of a result of Beauville \cite{Beauville}, $\Phi$ extends to a regular map on the blow-up of the diagonal $\Delta_F$
\begin{equation}\label{eqn_Intro_blowup}
    \tilde{\Phi}: \Bl_{\Delta_F}(F\times F)\to (\mathbb P^4)^*.
\end{equation}
Hence, we can say that $L\in F$ together with a normal direction $v\in H^0(N_{L|X})$ determines a unique hyperplane and can be thought of as a pair of "skew lines". 

The first motivation for this paper is the following.
\begin{question}\label{Intro_Q1}
    Is there a modular interpretation of the map \eqref{eqn_Intro_blowup}?
\end{question}

We will provide an answer using the Hilbert scheme. This is inspired by the work of Chen, Coskun, and Nollet \cite{CCN}, where they studied the component of Hilbert scheme of projective space containing a pair of skew lines.

\subsection{Hilbert Scheme of a Pair of Skew Lines}

 If $L_1$ and $L_2$ are disjoint, then the union $L_1\cup L_2$ can be regarded as a closed (reduced) subscheme of $X$ with Hilbert polynomial $2n+2$. According to Grothendieck \cite{Groth}, there is a projective scheme $\textup{Hilb}^{2n+2}(X)$ parameterizing the universal family of closed subschemes of $X$ with Hilbert polynomial $2n+2$. Then, by forgetting the order, there is a 2-to-1 rational map
\begin{equation}\label{Intro_eqn_2-to-1}
    F\times F \dashrightarrow \textup{Hilb}^{2n+2}(X),\ (L_1,L_2)\mapsto \mathcal{O}_{L_1\cup L_2},
\end{equation}
 whose image is a Zariski open dense subspace of an irreducible component $H(X)$ of $\textup{Hilb}^{2n+2}(X)$. The component $H(X)$ parameterizes flat families of subschemes of $X$ whose general member is a pair of skew lines (see Section \ref{Subsection_2.HS_Pn} for details). We call $H(X)$ the \textit{Hilbert scheme of a pair of skew lines} of $X$. 
 
By classification of flat limits of a pair of skew lines in projective spaces (cf. Section \ref{Subsection_2.HS_Pn}), $H(X)$ parameterizes four types of subschemes of $X$ with Hilbert polynomial $2n+2$. A type (I) scheme is a pair of skew lines; A type (III) scheme is supported on a pair of incidental lines with an embedded point at the intersection; a type (II)/(IV) scheme is generically nonreduced and is supported on a double line.

Let $\textup{Sym}^2F=(F\times F)/\Z_2$ denote the symmetric square of $F$. There is a Hilbert-Chow morphism \cite[Thm. 6.3]{K-RationalCurves}
\begin{equation}\label{eqn_Intro_HilbertChow}
    H(X)\to \Sym^2F,
\end{equation}
which is birational, and is an isomorphism over the locus where pairs of lines are disjoint. 

 Our main result is to describe this birational morphism.

\begin{theorem} (cf. Theorem \ref{main_theorem})\label{IntroHschemeThm}
$H(X)$ is smooth and isomorphic to the Hilbert scheme of two points $F^{[2]}$. 
\end{theorem}

In particular, the Hilbert-Chow morphism \eqref{eqn_Intro_HilbertChow} is the blow-up of the diagonal. This is analogous to the results by Chen, Coskun, and Nollet \cite{CCN}, where they proved the smoothness of the component $H(\mathbb P^3)$ of a pair of skew lines in the Hilbert scheme of $\mathbb P^3$ is smooth and isomorphic to the blow-up of the symmetric square of $Gr(2,4)$ on the diagonal. When $m\ge 4$, $H(\mathbb P^m)$ arises from a two-step blowup on the Chow variety. Especially we need their result for $m=4$ to prove Theorem \ref{main_theorem}. This is because $X\subseteq \mathbb P^4$ is a closed subscheme, hence $H(X)$ is a closed subscheme of $H(\mathbb P^m)$.

Another input is the geometry of the normal bundle $N_{L|X}$ of a line to the cubic threefold. It turns out that the line with normal bundle $\mathcal{O}\oplus \mathcal{O}$ has a $\mathbb P^1$-family of quadric surfaces tangent to it and corresponds to a $\mathbb P^1$-family of type (II) schemes; A line with normal bundle $\mathcal{O}(-1)\oplus \mathcal{O}(1)$ has a unique plane tangent to it, and by varying the embedded point, there is a $\mathbb P^1$-family of type (IV) schemes supported on it. Moreover, the parameterization is continuous as the line of the normal bundle $\mathcal{O}\oplus \mathcal{O}$ specializes to $\mathcal{O}(-1)\oplus \mathcal{O}(1)$. This is governed by the geometry of the dual map $x\mapsto T_xX$ along a line.

\subsection{Relative Hilbert Scheme}

Just as a pair of skew lines span a hyperplane, each scheme $Z\in H(X)$ spans a hyperplane. In particular, there is a morphism
\begin{equation}\label{Intro_eqn_pi}
    \pi: H(X)\to (\mathbb P^4)^*.
\end{equation}

It endows $H(X)$ a relative Hilbert scheme with respect to the universal family of hyperplane sections of $X$. In particular, the fiber $\pi^{-1}(H)$ parameterizes the subschemes of type (I)-(IV) of cubic surfaces $X\cap H$.

As a consequence of Theorem \ref{IntroHschemeThm}, we answer Question \ref{Intro_Q1} by the following.

\begin{proposition} (cf. Corollary \ref{Cor_Q1Answer})  \label{Intro_Prop}
The blow-up  $\Bl_{\Delta_F}(F\times F)$ is isomorphic to the double cover of $H(X)$ branched along the exceptional divisor and parameterizes subschemes of $X$ of type (I)-(IV) up to an order. The map $\tilde{\Phi}$ factors through 
$$\pi: H(X)\to (\mathbb P^4)^*,$$
which associates each subscheme of four types of $X$ to the unique hyperplane it spans. 
\end{proposition}

For general cubic threefold $X$, $X\cap H$ has only ADE singularities \cite[Lem. 2.7]{YZ_Cubic3fold}, but for special $X$, the hyperplane section may be a cone over an elliptic curve $E$ — which has an elliptic singularity at the cone point. This happens exactly when $H=T_pX$ is the tangent hyperplane at an Eckardt point $p$, through which infinitely many lines on $X$ pass. We found that $\pi$ is flat over the locus where $X\cap H$ has only ADE singularities and has positive dimensional fiber over $H$, which is the tangent hyperplane of an Eckardt point of $X$ (cf. Proposition \ref{HYtoP4dual}).

\begin{proposition} (cf. Proposition \ref{HYtoP4dual})
 $\pi$ is generically finite of degree 216, and the positive dimensional fiber corresponds to an Eckardt point of $X$ and is isomorphic to $\Sym ^2E$, where $E$ is an elliptic curve. 
\end{proposition}

\subsection{Relation to Bridgeland Moduli Spaces}

 According to \cite{APR}, there is a Bridgeland stable moduli space $\mathcal{M}_{\sigma}(w)$ whose general point parameterizes coherent sheaf $i_*\mathcal{O}_S(L-M)$ where $S$ a smooth hyperplane section of $X$ and $L,M$ a pair of skew lines and $i:S\to X$ is the inclusion. Let $JX$ denote the intermediate Jacobian of $X$. Then there is an Abel-Jacobi map $AJ:\mathcal{M}_{\sigma}(w)\to JX$ by taking the second Chern class, followed by the standard Abel-Jacobi map, which is described in \eqref{eqn_AJM_{sigma}}.

\begin{proposition}\label{IntroBridgeland}
Let $\Bl_{\Delta_F}(F\times F)\to JX$ be the composition of the blow-up and the Abel-Jacobi map. Then it factors through the Bridgeland moduli space $\mathcal{M}_{\sigma}(w)$ and makes the following diagram commute up to a translation on the torus.

\begin{figure}[ht]
    \centering
\begin{tikzcd}
\Bl_{\Delta_F}(F\times F)\arrow[dr] \arrow[r,"\psi"] &  \mathcal{M}_{\sigma}(w) \arrow[d,"AJ"]\\
  & JX.
\end{tikzcd}
\end{figure}{}

Moreover, $\psi$ has the following modular interpretation: it sends a general type (I) scheme with an order to $\mathcal{O}_S(L_1-L_2)$; it sends a type (II) and (IV) scheme $Z$ to the ideal sheaf $I_{p|S}$, where $p$ is the unique point on the support line of $Z$ determined by Corollary \ref{Cor_Q1Answer}. 
\end{proposition}

We also conjecture $\psi$ has modular interpretation for type (III) schemes and special type (I) schemes (cf. Conjecture \ref{Conj_typeIII}). We also proposed our next plan to extend Theorem \ref{IntroHschemeThm} to higher dimensions, particularly cubic fourfold. These will be explained in detail in Section \ref{Section_5.HS_Modular}.

\subsection*{Outline}
In Section \ref{Section_Cubic3fold_Prelim}, we will review basic facts on cubic surfaces and cubic threefolds. In Section \ref{Subsection_2.HS_Pn}, we will review the result of the Hilbert scheme of a pair of skew lines on projective spaces studied by \cite{CCN}. In Section \ref{Section_HS}, we study the Hilbert scheme $H(X)$ of a pair of skew lines on cubic threefold $X$ and prove Theorem \ref{IntroHschemeThm}. In Section \ref{Section_5.HS_CubicSurface}, we will study $H(X)$ restricted to a hyperplane section of $X$ and prove Proposition \ref{HYtoP4dual}. In Section \ref{Section_5.HS_Modular}, we will discuss some relationships to Bridgeland moduli spaces and a problem on cubic fourfold.

\subsection*{Acknowledgement} I would like to thank my advisor, Herb Clemens, for introducing me to this topic, answering my questions, and for his constant encouragement. I would like to thank Izzet Coskun for helpful conversations on Hilbert schemes as well as his lectures on K3 surfaces during the pandemic. Besides, I would like to thank Arend Bayer, Laure Flapan, Ritvik Ramkumar, Franco Rota, Will Sawin, and Shizhuo Zhang for many useful communications. In addition, I would like to thank the referees for their suggestions to revise the paper.

\section{Preliminaries}\label{Section_Cubic3fold_Prelim}

\subsection{Lines on Cubic Threefold}
Let $X$ be a smooth hypersurface of $\mathbb P^4$ of degree three. Let $Gr(2,5)$ be the Grassmanian of the lines in $\mathbb P^4$. The set of lines contained in $X$ is naturally a subvariety $F$ of $Gr(2,5)$ and is called the \textit{Fano variety of lines} of $X$. $F$ is a smooth surface of general type and parameterizes two types of lines distinguished by normal bundles. One can refer to \cite[Chapter 5]{Huy} for a detailed discussion on the related topics about cubic threefolds.

\begin{definition}\label{Def_Line12type_cubic3fold}
\normalfont A line $L\subseteq X$ is called to be of the \textit{first type} if the normal bundle $N_{L|X}\cong \mathcal{O}_L\oplus \mathcal{O}_L$; $L$ is called to be of the \textit{second type} if the normal bundle $N_{L|X}\cong \mathcal{O}_L(1)\oplus \mathcal{O}_L(-1)$.
\end{definition}

Equivalently, the dual map along the line $L$
\begin{equation}\label{DualMap}
    \mathcal{D}_{|L}:L\to (\mathbb P^4)^*,\ x\mapsto T_xX.
\end{equation}
is 1-to-1 onto a conic if $L$ is the first type if $\mathcal{D}_{|L}$, and is 2-to-1 onto a line if $L$ is the second type (cf. \cite[Def. 6.6]{CG}).

The lines of the first type are generic in $F$, while the set of lines of the second type forms a divisor of $F$.

\begin{proposition}  \label{prop_line12type_Prelim}
Let $L$ be a line on $X$, then

(1) $L$ is of the first type iff there is a smooth quadric surface in $\mathbb P^4$ tangent to $X$ along $L$.

(2) $L$ is of the second type iff there is a unique plane $P_L\cong \mathbb P^2$ tangent to $X$ along $L$.
\end{proposition}

\begin{proof} These are essentially due to Clemens and Griffiths (cf. \cite[Lem. 6.7, 6.18 (i)]{CG}). To give a quick geometric reason, suppose $L$ is of the first type and given by equation $L=\{x_2=x_3=x_4=0\}$, then the equation of $X$ can be expressed as \cite[(6.9)]{CG}
\begin{equation}\label{eqn_Lqstequation}
   x_2x_0^2+x_3x_0x_1+x_4x_1^2+\textup{higher order terms in}~x_2,x_3,x_4. 
\end{equation}

Remove the higher order terms and set $x_2=0$, it has a factor a quadric equation
\begin{equation} \label{eqn_quadricsurface1}
   x_3x_0+x_4x_1=0.
\end{equation}

When $L$ is of the second type, the image of the dual map \eqref{DualMap} is a line. So the $\mathbb P^2=\cap_{x\in L}T_xX$ is a plane tangent to $X$ along $L$.

Conversely, the normal bundle of a line in the quadric surface and $\mathbb P^2$ is a subbundle of the normal bundle $N_{L|X}$ and uniquely determines it.
\end{proof}

\begin{definition}\normalfont
An \textit{Eckardt point} $p$ on a cubic threefold $X$ is a point through which infinitely many lines on $X$ pass. 
\end{definition}

For a general point on $X$, there are 6 lines through it, so Eckardt points are special. There are at most finitely many Eckardt points on a smooth cubic threefold \cite[Lem. 10.15]{CG}. 

\begin{proposition}\label{Prop_Eckardt}
   The following statements are equivalent:
\begin{itemize}
    \item[(1)] $p\in X$ is an Eckardt point.
    \item[(2)] Lines on $X$ through $p$ form an elliptic curve.
    \item[(3)] The tangent hyperplane section $T_pX\cap X$ of $X$ at $p$ is a cone over a smooth plane cubic curve.
    \item[(4)] The hyperplane $H$ tangents to $X$ at $p$ and $H\cap X$ has an elliptic singularity. 
    
\end{itemize}  
\end{proposition}
\begin{proof}
$(1)\leftrightarrow(2)\leftrightarrow(3)$ is due to \cite[Lem. 8.1]{CG}. $(3)\leftrightarrow(4)$ is by classification of normal cubic surface (cf. Lem. \ref{classicication of cubic surfaces}). 
\end{proof}

Consequently, an Eckardt point $p$ corresponds to an elliptic curve $E\subseteq F$. We will unspokenly use this correspondence throughout the paper.

\subsection{Abel-Jacobi Map}
The intermediate Jacobian $JX$ of a smooth cubic threefold $X$ is a principally polarized abelian variety of dimension 5. Clemens and Griffiths \cite{CG} showed that $JX$ is not a product of Jacobian of curves using the Abel-Jacobi map  \cite{CG}
\begin{equation}
    \Psi: F\times F\to JX. \label{AJ}
\end{equation}

It is generically 6-to-1 onto the theta divisor $\Theta$ and contracts the diagonal $\Delta_F$ to a singular point $0$ of $\Theta$.

Beauville \cite{Beauville} showed that $0$ is the only singularity of $\Theta$ and the blow-up $\Bl_0(\Theta)$ is smooth, with the exceptional divisor isomorphic to the cubic threefold $X$. As a consequence, this provides an alternative proof of the Torelli theorem for cubic threefolds.

In addition, Beauville showed that the Abel-Jacobi map extends to the blowup on the both domain and the target and factors through the blow-up $\Bl_0(\Theta)$.

\begin{lemma}\cite{Beauville}\label{Lemma_BlTheta}
 There is a commutative diagram

\begin{figure}[ht]
    \centering
    \begin{equation}\label{diagram_blowupAJ}
\begin{tikzcd}
\textup{Bl}_{\Delta_F}(F\times F)\arrow[d,"\sigma"] \arrow[r,"\tilde{\Psi}"] &  \textup{Bl}_0(\Theta) \arrow[d]\arrow[r,hookrightarrow] & \textup{Bl}_0JX\arrow[d]\\
F\times F\arrow[r,"\Psi"]&\Theta \arrow[r,hookrightarrow]& JX.
\end{tikzcd}
    \end{equation}
\end{figure}
\end{lemma}


The exceptional divisor of $\sigma$ is isomorphic to the projectivized tangent bundle $\mathbb PT_F$. In particular, the fiber $\sigma^{-1}(L,L)\cong \mathbb P^1$ over a double line on the diagonal is identified with $\mathbb PH^0(N_{L|X})$.

There is a geometric interpretation of the map $\tilde{\Psi}$ on the exceptional divisor using incidence correspondence described in \cite[Prop. 12.31]{CG}. Let $I$ be the subvariety of $F\times X$ together with the two projections
\begin{figure}[ht]
    \centering
    \begin{equation}\label{diagram_incidence}
\begin{tikzcd}
&I=\{(t,x)\in F\times X|x\in L_t\}\arrow[dr,"p_2"]\arrow[dl,"p_1"'] \\
F&& X.
\end{tikzcd}
    \end{equation}
\end{figure}

Then $p_1$ is a $\mathbb P^1$ bundle, and $p_2$ is generically 6-to-1.

\begin{lemma}(cf. \cite[p.342-p.345]{CG}, \cite{Tju}, \cite{AT}) \label{P^1_bundle_iso_lemma}
There is a canonical isomorphism 
\begin{equation}
  \alpha:\mathbb PT_F\cong I \label{P^1_bundle_iso}
\end{equation}
as $\mathbb P^1$-bundle over $F$.
\end{lemma}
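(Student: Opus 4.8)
The plan is to produce the isomorphism fibrewise over $F$ and then check that the fibrewise identifications assemble into a morphism of $\mathbb P^1$-bundles. First I would recall the deformation-theoretic description of the tangent bundle. Write $p:\Gamma\to F$ for the universal line, so that $L_t=p^{-1}(t)=\mathbb P(\mathcal S_t)$ for the rank-two tautological subbundle $\mathcal S\subset V\otimes\mathcal O_F$ (with $\mathbb P^4=\mathbb P(V)$, $V\cong\mathbb C^5$). Deformation theory of the Fano scheme gives $T_{[L]}F\cong H^0(L,N_{L/Y})$. Since $N_{L/Y}$ is either $\mathcal O_L\oplus\mathcal O_L$ or $\mathcal O_L(1)\oplus\mathcal O_L(-1)$, one has $H^1(L,N_{L/Y})=0$ in every case, so by cohomology-and-base-change the relative normal bundle $\mathcal N$ of $\Gamma$ in $F\times Y$ pushes forward to a rank-two bundle $p_*\mathcal N\cong T_F$, and $\mathbb PT_F=\mathbb P(p_*\mathcal N)$.

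The heart of the argument is a fibrewise computation identifying $\mathbb P\big(H^0(L,N_{L/Y})\big)$ with $L$. I would use the normal-bundle sequence
\[
0\longrightarrow N_{L/Y}\longrightarrow N_{L/\mathbb P^4}=\mathcal O_L(1)\otimes W\xrightarrow{\ \mathrm{id}\otimes\mu\ }\mathcal O_L(3)\longrightarrow 0,
\]
where $W=V/V_1$ (with $V_1=\mathcal S_t$) and $\mu:W\to \mathrm{Sym}^2V_1^*=H^0(\mathcal O_L(2))$ is the second fundamental form determined by the defining cubic of $Y$ along $L$. Passing to global sections identifies $T_{[L]}F$ with the linear-syzygy space $\ker\big(V_1^*\otimes W\to \mathrm{Sym}^3V_1^*\big)$. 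The point is that this kernel is canonically a copy of $V_1^*$: when $L$ is of first type $\mu$ is an isomorphism and the kernel is carried isomorphically onto $\ker(V_1^*\otimes\mathrm{Sym}^2V_1^*\to\mathrm{Sym}^3V_1^*)$, which by Clebsch–Gordan is the unique $\mathrm{SL}(V_1)$-subrepresentation isomorphic to $V_1^*$; when $L$ is of second type $\mu$ has rank two and the kernel equals $V_1^*\otimes\ker\mu\cong V_1^*$. In either case $\mathbb P(T_{[L]}F)\cong\mathbb P(V_1^*)$, and the canonical duality $\mathbb P(V_1^*)\xrightarrow{\sim}\mathbb P(V_1)=L$, sending a linear form to its unique zero on $L$, is the desired fibrewise isomorphism $\alpha_t$.

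To globalize I would exhibit $\alpha$ as an honest morphism rather than a mere family of fibre bijections. Over the dense open locus $F^\circ\subset F$ of first-type lines the construction above is visibly algebraic (it uses only the isomorphism $\mu$), giving a morphism $\alpha^\circ:\mathbb PT_F|_{F^\circ}\xrightarrow{\sim}\Gamma|_{F^\circ}$; I would then take the closure of its graph inside the fibre product $\mathbb PT_F\times_F\Gamma$ and argue that, because both factors are smooth $\mathbb P^1$-bundles over the smooth surface $F$ and the second-type computation shows $\alpha_t$ remains an isomorphism over the second-type curve, the closure meets every fibre in the graph of an isomorphism, hence is the graph of a global isomorphism $\alpha$. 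Equivalently, one may invoke Clemens–Griffiths, Proposition 12.31.

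The main obstacle is precisely this gluing across the jumping of $N_{L/Y}$: the identification of $\ker\big(V_1^*\otimes W\to\mathrm{Sym}^3V_1^*\big)$ with $V_1^*$ is effected by genuinely different maps on the two strata (through $\mu$ in one case, through $\ker\mu$ in the other), so there is no single universal formula for $\alpha$ valid on all of $F$. The care needed is to verify that the fibrewise isomorphisms do not degenerate along the second-type curve — so that the limiting correspondence is again the graph of an isomorphism and not, say, a constant section — which is where the explicit rank-two analysis of $\mu$ and the smoothness of $F$ (guaranteeing that $\mathbb PT_F$ is an honest $\mathbb P^1$-bundle) enter.
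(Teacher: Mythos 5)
Your setup is sound and your first-type computation is correct: for a line with local equation $x_2x_0^2+x_3x_0x_1+x_4x_1^2+\cdots$ the section $v=(-ax_1,\,ax_0+bx_1,\,-bx_0)$ of $N_{L|\mathbb P^4}$ lies in $H^0(N_{L|Y})$, your Clebsch--Gordan projection extracts the linear form $ax_0-bx_1$, and its zero $[b,a]$ is exactly the tangent point of the spanning hyperplane $H_v$ from $(\ref{Hv equation})$. So over the first-type locus your $\alpha^\circ$ agrees with the map of Lemma \ref{P^1_bundle_iso_lemma}. (The paper itself does not reprove the lemma --- it cites \cite{CG}, \cite{Tju}, \cite{AT} --- so a self-contained argument like yours is welcome in principle.)

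The gap is in the second-type stratum, and it is precisely the point you flag as the main obstacle but then resolve incorrectly. Your proposed fibrewise map there sends $s=\ell\otimes k\in V_1^*\otimes\ker\mu$ to $\mathrm{zero}(\ell)=\mathrm{zero}(s)$. But this is \emph{not} the limit of $\alpha^\circ$: running your own recipe on the degenerating family $x_2x_0^2+tx_3x_0x_1+x_4x_1^2$ as in $(\ref{deg_1st_to_2nd})$, the section $v_t=(-tax_1,\,ax_0+bx_1,\,-tbx_0)$ yields the linear form $t(ax_0-bx_1)$ and hence the tangent point $[b,a]$ for every $t\neq 0$, whereas the limiting section $v_0=(ax_0+bx_1)\otimes e_3$ has $\mathrm{zero}(v_0)=[b,-a]$. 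The two differ by the nontrivial involution of $L$ induced by the $2$-to-$1$ dual map, so the true extension of $\alpha^\circ$ across the second-type curve is $s\mapsto\overline{\mathrm{zero}(s)}$, the \emph{conjugate} of the zero --- this is exactly the content of Proposition \ref{ConjProp}. As written, your two fibrewise formulas do not glue into a continuous map, and your justification that ``the closure meets every fibre in the graph of an isomorphism'' appeals to a fibrewise isomorphism that is not in fact the fibre of the graph closure; a positive-dimensional fibre of $\overline{\Gamma(\alpha^\circ)}\to\mathbb PT_F$ over the second-type curve is not excluded without an actual limit computation. The conclusion can be salvaged (the limit is the composite of your map with an involution of $\mathbb P^1$, still an isomorphism), but that requires carrying out the degeneration computation above, which is the substance your proof is missing.
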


\begin{lemma}(cf. \cite[p.244]{Huy})\label{Lemma_AJlift-on-diagonal}
Via the identification \eqref{P^1_bundle_iso}, the restriction of the extended Abel-Jacobi map $\tilde{\Psi}$ to the exceptional divisor $\sigma^{-1}(\Delta_F)$ is identified to the projection
$$p_2:I\to X,~(t,x)\mapsto x.$$
\end{lemma}

\subsection{Gauss Map}
Gauss map is the rational map
\begin{equation}\label{Eqn_GaussMap}
   \mathcal{G}:\Theta\dashrightarrow (\mathbb P^4)^*,\: p\mapsto T_p\Theta 
\end{equation}
that associates a smooth point $p$ to its projective tangent hyperplane $T_p\Theta$. Note that here we used the fact that the cotangent bundle of Jacobian is trivial.

\begin{lemma} \cite[13.6]{CG} \label{eqn_AJ-Gauss}
The rational map \eqref{eqn_Intro_rationalmap} factors through the Abel-Jacobi map \eqref{AJ} and Gauss map \eqref{Eqn_GaussMap}. In other words, there is a commutative diagram

\begin{figure}[ht]
    \centering
    \begin{equation}\label{Diagram_PhiPsiG}
\begin{tikzcd}
F\times F\arrow[dr, dashed, "\Phi"] \arrow[r,"\Psi"] &  \Theta \arrow[d,dashed,"\mathcal{G}"]\\
&(\mathbb P^4)^*.
\end{tikzcd}
    \end{equation}
\end{figure}
\end{lemma}

According to Lemma \ref{Lemma_BlTheta}, the Gauss map \eqref{Eqn_GaussMap} extends to a morphism 
\begin{equation}\label{Eqn_LiftGaussMap}
   \tilde{\mathcal{G}}:\Bl_0(\Theta)\to (\mathbb P^4)^*
\end{equation}

 The restriction of $\tilde{\mathcal{G}}$ to the exceptional divisor is the dual map $$X\to (\mathbb P^4)^*,\  x\mapsto T_xX.$$ This result, together with \eqref{diagram_blowupAJ}, will prove that \eqref{eqn_Intro_blowup} is indeed a morphism.

 \begin{corollary}
     $\Phi$ extends to a morphism $\tilde{\Phi}$ on the blow-up, and factors through $\tilde{\mathcal{G}}$. In particular, diagram \eqref{Diagram_PhiPsiG} extends to

\begin{figure}[ht]
    \centering
\begin{equation}\label{AJblowupDiagram}
\begin{tikzcd}
\Bl_{\Delta_F}(F\times F) \arrow[r,"\tilde{\Psi}"]\arrow[dr,"\tilde{\Phi}"] &  \textup{Bl}_0(\Theta) \arrow[d,"\tilde{\mathcal{G}}"]\\
& (\mathbb P^4)^*.
\end{tikzcd}
\end{equation}
\end{figure}

 \end{corollary}

\section{Hilbert Scheme of a Pair of Skew Lines on Projective Spaces}\label{Subsection_2.HS_Pn}

In this section, we will review the results of the Hilbert scheme of a pair of skew lines on the projective spaces by Chen, Coskun, and Nollet \cite{CCN}.

Let $m\ge 3$, the Hilbert scheme $\textup{Hilb}^{2n+2}(\mathbb P^m)$ has two irreducible components, $H(\mathbb P^m)$ and $H'(\mathbb P^m)$. A general point in $H(\mathbb P^m)$ parameterizes a pair of skew lines. A general point in $H'(\mathbb P^m)$ parameterizes the union of a smooth conic and an isolated point. 

\begin{definition}\normalfont\label{Def_Hm}
Call $H(\mathbb P^m)$ the \textit{Hilbert scheme of a pair of skew lines} on $\mathbb P^m$.
\end{definition}

$H(\mathbb P^m)$ parameterizes four types of schemes:
\begin{itemize}
    \item[(I)] A pair of skew lines;
    \item[(II)] A line with a double structure remembering the normal direction to a quadric surface;
    \item[(III)] A pair of incident lines with an embedded point at the intersection;
    \item[(IV)] A line with a double structure remembering the normal direction to a plane, together with an embedded point on the line.
\end{itemize}

\begin{figure}[h]
     \centering 
     \begin{subfigure}[b]{0.2\textwidth}
         \centering
         \includegraphics[width=\textwidth]{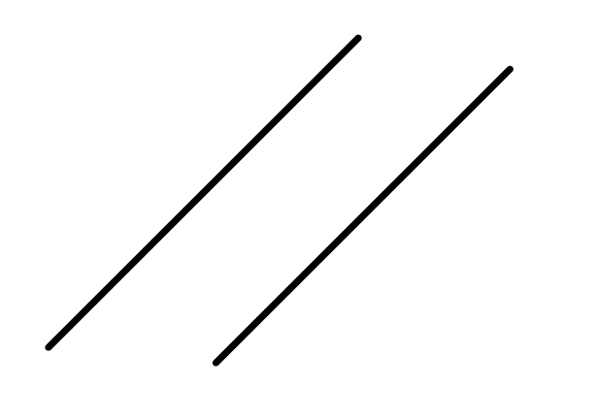}
         \caption*{Type (I)}
         \label{fig:Type (I)}
     \end{subfigure}
     \hspace{6em}
     \begin{subfigure}[b]{0.17\textwidth}
         \centering
         \includegraphics[width=\textwidth]{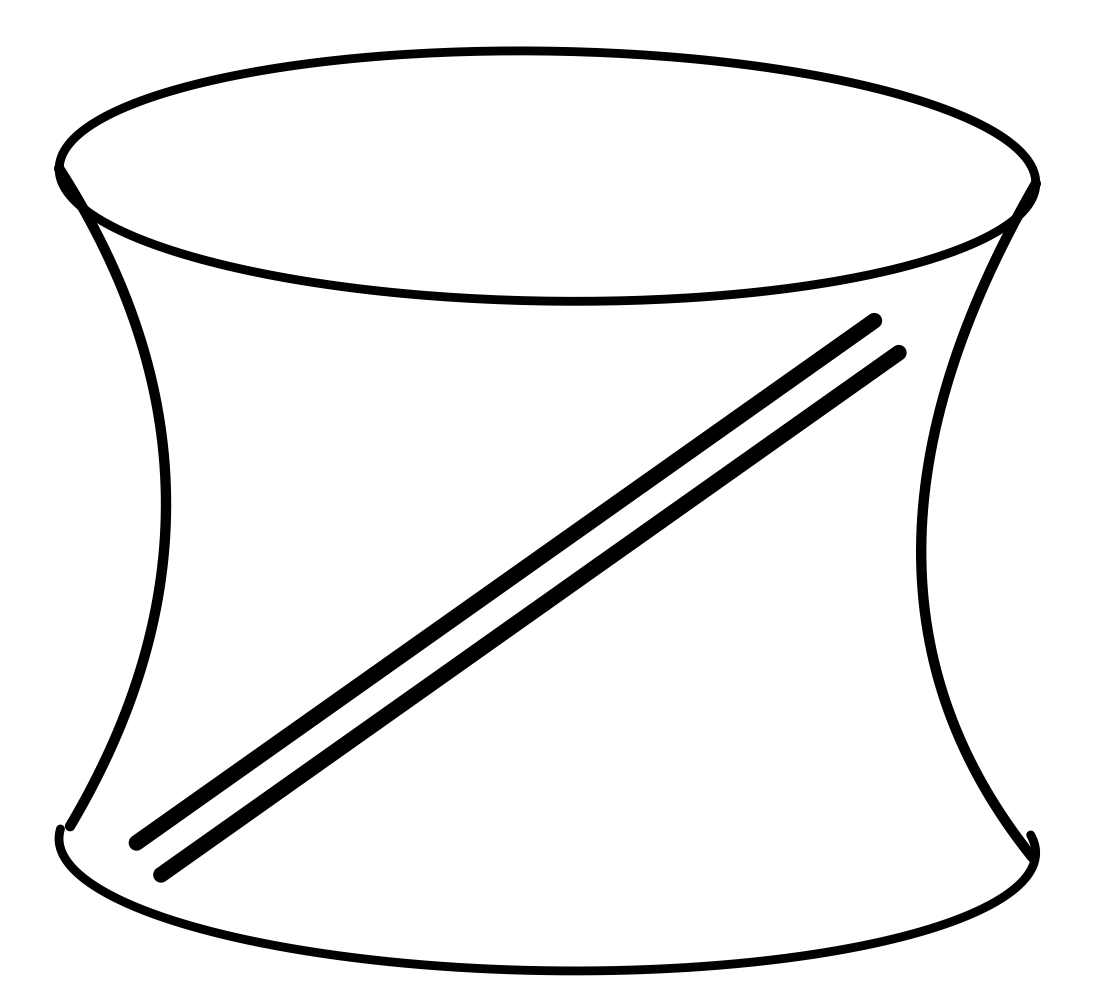}
         \caption*{Type (II)}
         \label{fig:Type (II)}
     \end{subfigure}\\
   \begin{subfigure}[b]{0.2\textwidth}
         \centering
         \includegraphics[width=\textwidth]{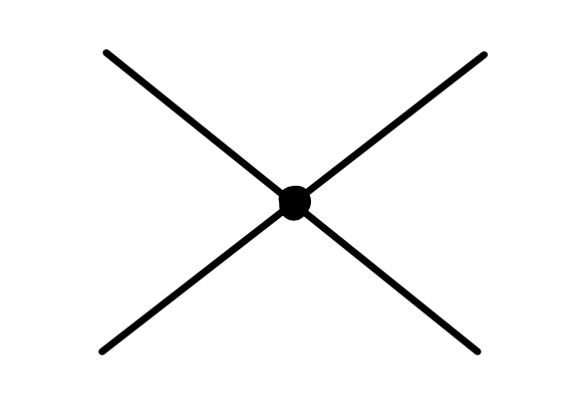}
         \caption*{Type (III)}
         \label{fig:Type (III)}
     \end{subfigure}
     \hspace{6em}
     \begin{subfigure}[b]{0.2\textwidth}
         \centering
         \includegraphics[width=\textwidth]{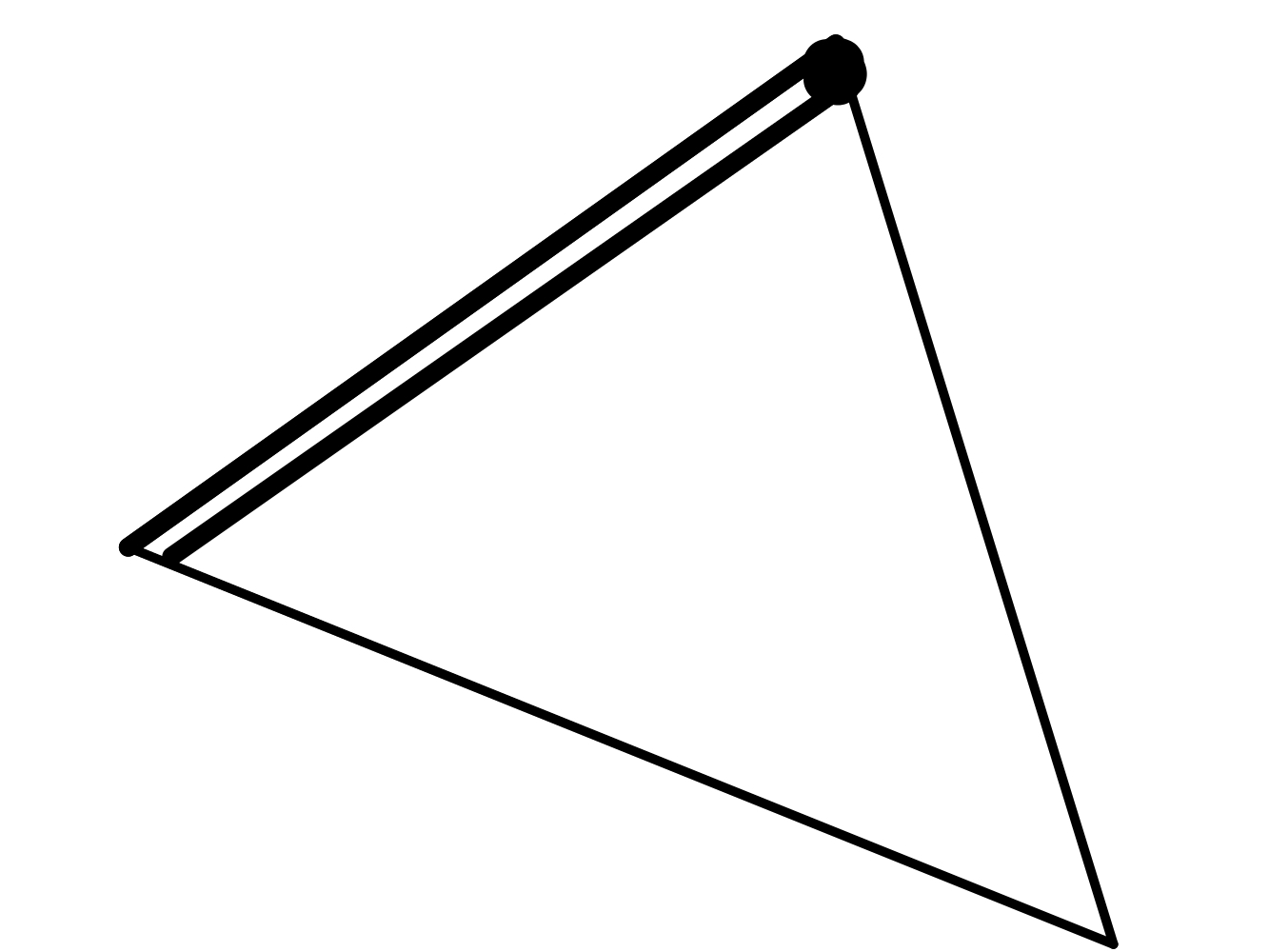}
         \caption*{Type (IV)}
         \label{fig:Type (IV)}
     \end{subfigure}
        \caption{Schemes of the Four Types}
        \label{fig:Type}
\end{figure}

In a scheme-theoretic language, a type (II) scheme is the first-order infinitesimal neighborhood of a line in a smooth quadric surface; a type (IV) scheme is the first-order infinitesimal neighborhood of a line in a plane, together with an embedded point on the line. The flat degenerations of the four types of schemes can be described in \cite[p.4]{CCN}.

Harris \cite[1.b]{Harris} asked if $H(\mathbb P^3)$ is smooth. The proof strategy is sketched in \cite[3.5.10]{Lee} by Lee and was carried out by Chen, Coskun, and Nollet in \cite{CCN} by investigating the deformation on the locus where two components $H(\mathbb P^{m})$ and $H'(\mathbb P^m)$ intersect. 

\begin{theorem}(\cite[Thm. 1.1, Cor. 2.8]{CCN})\label{thm_CCN}
When $m\ge 3$, the component $H(\mathbb P^m)$ is smooth.
\end{theorem}

When $m=3$, the Hilbert-Chow morphism
$$\rho_3: H(\mathbb P^3)\to \Sym^2Gr(2,4)$$
is the blow-up of the diagonal. In other words, $H(\mathbb P^3)$ is isomorphic to the Hilbert scheme of two points $Gr(2,4)^{[2]}$.

Just like two skew lines span a $\mathbb P^3$, each of the schemes of the four types (I)-(IV) is contained in a unique linear subspace $\mathbb P^3$ of $\mathbb P^m$ (Lem. 3.5.3, \cite{Lee}), hence there is a projection
\begin{equation}\label{eqn_piP^m}
  \pi: H(\mathbb P^m)\to Gr(4,m+1)  
\end{equation}
with fiber isomorphic to $H(\mathbb P^3)$, when $m\ge 4$. In addition, the Hilbert-Chow morphism 
\begin{equation}\label{Eqn_HC}
    \rho_m:H(\mathbb P^m)\to \textup{Sym}^2Gr(2,m+1)
\end{equation}
factors through $Gr(2,m+1)^{[2]}$ and becomes a two-step blow-up:

\begin{proposition}\label{H4_blowup}
The Hilbert-Chow morphism \eqref{Eqn_HC} factors through 
\begin{equation} \label{H4_blowup_formula}
    H(\mathbb P^m)\xrightarrow{\sigma_2}\textup{Bl}_{\Delta}\textup{Sym}^2Gr(2,m+1)\xrightarrow{\sigma_1} \textup{Sym}^2Gr(2,m+1),
\end{equation}

\noindent where $\sigma_1$ blows up the diagonal, and $\sigma_2$ blows up the strict transform of the locus $D\subseteq \textup{Sym}^2Gr(2,m+1)$ parameterizing pairs of incident lines.
\end{proposition}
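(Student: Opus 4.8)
The plan is to exhibit $\rho_m$ as the composite of the two blow-ups by applying the universal property of blowing up twice, and then to check that the resulting birational morphism to $\textup{Bl}_{\tilde D}\textup{Bl}_{\Delta}\textup{Sym}^2Gr(2,m+1)$ is an isomorphism by Zariski's Main Theorem. The standing facts I will use are that $H_m$ is smooth (from the $H_3$-bundle structure $\pi$ of $(\ref{HmDiagram})$ together with Theorem \ref{CCNP3}), that $\rho_m$ is proper and birational and restricts to an isomorphism over the open skew locus $\textup{Sym}^2Gr(2,m+1)\setminus D$, and that its non-isomorphism locus lies over the stratification $\Delta\subseteq D$. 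The numerical input that makes everything match is the fibre dimension of $\rho_m$ over the two strata. Over a point of $D\setminus\Delta$ the support is a pair of distinct lines meeting at a point and spanning a plane $\mathbb P^2$; the type (III) schemes supported there are classified, via $\pi$, by the choice of a $\mathbb P^3\supseteq\mathbb P^2$, so the fibre is $\mathbb P^{m-3}$. Over a point of $\Delta$ the fibre consists of the type (II) and type (IV) double structures on a fixed line.

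For the first factorization, note that $\textup{Bl}_{\Delta}\textup{Sym}^2Gr(2,m+1)$ is the length-two Hilbert scheme $\textup{Hilb}^2(Gr(2,m+1))$ and is therefore smooth. By the universal property it is enough to show that $\rho_m^{-1}\mathcal{I}_{\Delta}\cdot\mathcal{O}_{H_m}$ is invertible, which I would verify locally along the divisor of type (II)/(IV) schemes using the explicit ideals written down after Theorem \ref{CCNP3}; there the pullback of $\mathcal{I}_{\Delta}$ is cut out by a single equation. This yields a proper birational $g\colon H_m\to\textup{Bl}_{\Delta}\textup{Sym}^2Gr(2,m+1)$. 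Writing $\tilde D$ for the strict transform of $D$, I would then run the universal property a second time, checking that $g^{-1}\mathcal{I}_{\tilde D}\cdot\mathcal{O}_{H_m}$ is invertible by a local computation along the type (III) locus, where the extra datum being resolved is precisely the embedded point sticking out of the spanning plane. This gives $h\colon H_m\to\textup{Bl}_{\tilde D}\textup{Bl}_{\Delta}\textup{Sym}^2Gr(2,m+1)$ with $\sigma_1\circ\sigma_2\circ h=\rho_m$.

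It then remains to prove that $h$ is an isomorphism. After checking that $\tilde D$ is smooth (so the second blow-up stays smooth, or at least normal), both source and target are normal and $h$ is proper and birational, so by Zariski's Main Theorem it suffices to show that $h$ has finite fibres. This is a stratum-by-stratum count: over the skew locus both sides are isomorphisms; over $D\setminus\Delta$, which is disjoint from the centre $\Delta$, $\sigma_1$ is an isomorphism while $\sigma_2$ contributes a $\mathbb P^{m-3}$ exceptional fibre, matching the $\mathbb P^{m-3}$ fibre of $\rho_m$ computed above, so $h$ is quasi-finite there.

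The main obstacle is the comparison over the deepest stratum $\Delta$, where the inclusion $\Delta\subseteq D$ forces the two blow-ups to interact: over a double line the exceptional $\mathbb P^{2m-3}$-bundle created by $\sigma_1$ is further modified by $\sigma_2$ along the intersection of $\tilde D$ with the exceptional divisor of $\sigma_1$, and this iterated fibre must be matched exactly with the family of type (II) and type (IV) structures on the line. My approach here is to reduce to the known case $m=3$ through $\pi$: each fibre of $\pi$ is $H_3\cong\textup{Bl}_{\Delta}\textup{Sym}^2Gr(2,4)$ by Theorem \ref{CCNP3}, in which $D$ is already a divisor so that the analogue of $\sigma_2$ is trivial, and the global picture over $\Delta$ should follow by relativizing this computation over $Gr(4,m+1)$ and tracking how $\Delta$ and $\tilde D$ meet the universal $\mathbb P^3$. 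The delicate point is that the support map to $\textup{Sym}^2Gr(2,m+1)$ does not factor through $\pi$, so this reduction is not a clean base change and the fibre dimension of the iterated blow-up over $\Delta$ must be pinned down directly; once that is done and finiteness of all fibres of $h$ is established, Zariski's Main Theorem completes the proof.
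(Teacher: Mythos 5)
Your overall architecture (two applications of the universal property of blowing up, then Zariski's Main Theorem) is the same as the paper's, and the first half is fine: the factorization $\rho_m=\sigma_1\circ g$ follows from smoothness of $H_m$ and the fact that $\rho_m^{-1}(\Delta)$ is a divisor, and the second factorization through $\textup{Bl}_{\tilde D}\textup{Bl}_{\Delta}\textup{Sym}^2Gr(2,m+1)$ follows likewise because $g^{-1}(\tilde D)$ is the divisor of type (III)/(IV) schemes. But the proof has a genuine gap exactly where you flag ``the delicate point'': you never establish that $h$ is quasi-finite (or bijective) over the deepest stratum $\Delta$, where the exceptional divisor of $\sigma_1$ meets $\tilde D$ and the two blow-ups interact. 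Saying that the fibre dimension of the iterated blow-up over $\Delta$ ``must be pinned down directly; once that is done\dots'' defers precisely the step that carries the content of the proposition, so as written the argument is incomplete.

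The paper closes this gap not by computing the iterated exceptional fibres over $\Delta$ but by a global fibration trick that you mention and then set aside. Both $H_m$ and $W=\textup{Bl}_{\tilde D}\textup{Bl}_{\Delta}\textup{Sym}^2Gr(2,m+1)$ admit morphisms to $Gr(4,m+1)$ (the span map $\phi$ becomes a morphism $\pi'$ on $W$ after the two blow-ups), and the induced map $\sigma_3\colon H_m\to W$ commutes with them. The fibre of $\pi$ is $H_3\cong\textup{Bl}_{\Delta}\textup{Sym}^2Gr(2,4)$ by Theorem \ref{CCNP3}, while the fibre of $\pi'$ is the closure in $W$ of the locus of pairs spanning a fixed $\mathbb P^3$, i.e.\ the strict transform of $\textup{Sym}^2Gr(2,4)$, which is again $\textup{Bl}_{\Delta}\textup{Sym}^2Gr(2,4)$ (the second blow-up does not modify it since $\tilde D$ meets it in a divisor). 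Hence $\sigma_3$ is bijective on every fibre, so bijective and birational, and Zariski's Main Theorem applies --- all strata, including $\Delta$, are handled at once. Your objection that the support map does not factor through $\pi$ is beside the point: one does not need a base-change identification of $\rho_m$ with a relative blow-up, only the compatibility of $\sigma_3$ with the two projections to $Gr(4,m+1)$ and the identification of both fibres with the same $H_3$. If you replace your unresolved direct computation over $\Delta$ with this fibrewise comparison, your proof goes through.
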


\begin{proof}
Let $I_{\Delta}$ be the ideal sheaf of the diagonal $\Delta\subseteq \textup{Sym}^2Gr(2,m+1)$, then the pullback $p^*I_{\Delta}$ is an ideal sheaf of a divisor, which is invertible since $H(\mathbb P^m)$ is smooth (Thm. \ref{thm_CCN}). So by the universal property of the blowup (\cite{Hartshorne}, Prop. II.7.14), the Hilbert-Chow morphism \eqref{Eqn_HC} factors through $\textup{Bl}_{\Delta}\textup{Sym}^2Gr(2,m+1)$ as in $(\ref{H4_blowup_formula})$, where $\sigma_1$ blows up the diagonal and $\sigma_2$ is birational. 

Let $D\subseteq \textup{Sym}^2Gr(2,m+1)$ denote the locus of a pair of incident lines and $\tilde{D}$ the strict transform, which has codimension $m-2$. For a type (III) supported on a pair of incident lines $L_1\cup L_2$, the embedded point determines and is uniquely determined by a $\mathbb P^3$ containing $L_1\cup L_2$, and there is a $\mathbb P^{m-3}$-family of such hyperplanes. Therefore, the general fiber over $D$ (and therefore over $\tilde{D}$) is isomorphic to $\mathbb P^{m-3}$. Now, $\sigma_2^{-1}(\tilde{D})$ is a divisor. By the same argument, $\sigma_2$ factors through $W=\textup{Bl}_{\tilde{D}}\textup{Bl}_{\Delta}\textup{Sym}^2Gr(2,m+1)$. We have a commutative diagram

\begin{figure}[ht]
    \centering
\begin{tikzcd}
H(\mathbb P^m) \arrow[d,"\sigma_3"] \arrow[r, "\pi"] \arrow[dd, bend right, swap, "\rho_m"] & Gr(4,m+1)\\
W\arrow[ur, "\pi'"]\arrow[d]\\
\textup{Sym}^2Gr(2,m+1).  \arrow[uur,dashrightarrow,"\phi"] 
\end{tikzcd}
\end{figure}{}

$\pi'$ is the morphism induced by the rational map $\phi:(L_1,L_2)\mapsto \textup{span}(L_1,L_2)$ and $\pi=\pi'\circ\sigma_3$. The fiber of $\phi$ is a dense subset of $\textup{Sym}^2Gr(2,4)$, whose closure in $W$ is isomorphic to $\textup{Bl}_{\Delta}\textup{Sym}^2Gr(2,4)$, so it is the fiber of $\pi'$. On the other hand, we know that the fiber of $\pi$ is also $H_3\cong \textup{Bl}_{\Delta}\textup{Sym}^2Gr(2,4)$, so $\sigma_3$ is a bijective birational morphism. Therefore, by Zariski's main theorem, it is an isomorphism.
\end{proof}

One can find a similar argument in \cite[p.15]{CCN} and \cite[Prop. 6.11]{Ritvik}. The map $\sigma_2$ can be regarded as a map that "forgets" embedded points.

\begin{example}\normalfont We describe the Hilbert-Chow morphism for $m=4$ case
$$\rho_4: H(\mathbb P^4)\to \textup{Sym}^2Gr(2,5).$$

 The complement $\textup{Sym}^2Gr(2,5)\setminus D$ parameterizes type (I) schemes, where $\rho_4$ is an isomorphism; Over the locus $D\setminus \Delta$ where two lines interest at a point, $H(\mathbb P^4)$ is a $\mathbb P^1$-bundle: the fiber parameterizes the hyperplanes containing the two lines and corresponds to different type (III) schemes; Over the diagonal $\Delta$, $H(\mathbb P^4)$ is a $\tilde{\mathbb P^5}$-bundle, where $f:\tilde{\mathbb P^5}\to \mathbb P^5$ is blow-up along a codimension-two locus where strict transform of $D$ and $\mathbb P^5$ intersect (smooth and defined by three quadrics). The exceptional divisor of $f$ parameterizes type (IV) schemes, while the complement parameterizes type (II) schemes.
\end{example}

\section{Hibert Scheme of a Pair of Skew Lines on Cubic Threefolds}\label{Section_HS}

In this section, we characterize the irreducible component $H(X)$ of the Hilbert scheme of a cubic threefold $X$ that contains a pair of skew lines in $X$. Our main goal is to prove Theorem \ref{main_theorem}. As a consequence, Corollary \ref{Cor_Q1Answer} will provide a new interpretation of the morphism \eqref{eqn_Intro_blowup} and Lemma \ref{Lemma_AJlift-on-diagonal}.

\subsection{Main Theorem}


Recall that $F$ is the Fano surface of the lines in cubic threefold, then through the natural inclusion $F\hookrightarrow Gr(2,5)$, there is the following commutative diagram

\begin{figure}[ht]
    \centering
\begin{equation}\label{HilbertChow}
\begin{tikzcd}
H(X)\arrow[d,"\rho_X"] \arrow[r,hookrightarrow] &  H(\mathbb P^4) \arrow[d,"\rho_4"]\\
\textup{Sym}^2F\arrow[r,hookrightarrow]  & \textup{Sym}^2Gr(2,5),
\end{tikzcd}
\end{equation}
\end{figure}{}
\noindent where the vertical maps are Hilbert-Chow morphisms.

$\rho_4$ is a successive two-step blow-up according to Proposition \eqref{H4_blowup}. By the universal property of blow-up, $\rho_X$ is the strict transform. Our goal is to explicitly characterize $H(X)$ through this birational morphism.

\begin{theorem}\label{main_theorem}
 $H(X)$ is smooth and is isomorphic to $\textup{Bl}_{\Delta_F}\textup{Sym}^2F$, the blow-up of symmetric square of $F$ along the diagonal.
\end{theorem}

To interpret our result, there is a unique way of assigning a subscheme of type (III) of $X$ to a pair of incident lines. The type (II) and type (IV) locus form a $\mathbb P^1$-bundle over $F$, which is identified with the incidence variety $I$ (cf. Lemma \ref{P^1_bundle_iso_lemma}). Note that our result is analogous to $H(\mathbb P^3)\cong \Bl_{\Delta}Gr(2,4)$ (cf. Theorem \ref{thm_CCN}), but different from projective space where a double line can support both type (II) and type (IV) schemes, a double line $L$ in cubic threefold $X$ can only support a single type depending on the normal bundle $N_{L|X}$.

As a consequence of Theorem \ref{main_theorem}, there is a double cover $\widetilde{H(X)}$ of $H(X)$ branched along the exceptional divisor. It comes from a commutative diagram

\begin{figure}[h]
   \centering
\begin{tikzcd}
\widetilde{H(X)}=\textup{Bl}_{\Delta_F}(F\times F)\arrow[r,"/\mathbb Z_2"] \arrow[d,""] & \textup{Bl}_{\Delta_F}\textup{Sym}^2F\cong H(X) \arrow[d] \\
F\times F\arrow[r,"/\mathbb Z_2"]& \textup{Sym}^2F.
\end{tikzcd}
\end{figure}

Hence, to reinterpret Beauville's result that the rational map $F\times F\dashrightarrow (\mathbb P^4)^*$ extends to a morphism on the blow-up \eqref{eqn_Intro_blowup}, we can say the subscheme $Z\in H(X)$ of $X$ of the four types generalize the notion of "a pair of skew lines". We denote $\pi:H(X)\to (\mathbb P^4)^*$ the composition of inclusion $H(X)\hookrightarrow H(\mathbb P^4)$ and \eqref{eqn_piP^m}.

\begin{corollary}\label{Cor_Q1Answer}The map \eqref{eqn_Intro_blowup} factors through the composition 
$$\widetilde{H(X)}\to H(X)\xrightarrow{\pi} (\mathbb P^4)^*,$$
 and assigns each subscheme $Z\in H(X)$ of $X$ to the unique hyperplane that contains it. Moreover, its restriction to the exceptional divisor 
$$\tilde{\Psi}|_{\mathcal{E}}: \mathbb PT_F\to X$$
is given by the following: It sends a type (II)/(IV) scheme $Z$ to a point $x$ such that $T_xX$ is the hyperplane spanned by $Z$.
\end{corollary}

This also provides a new interpretation of Lemma \ref{Lemma_AJlift-on-diagonal}.

\begin{remark}\normalfont
    We can regard $\widetilde{H(X)}$ as the "Hilbert scheme" of a pair of ordered skew lines, or rather nested Hilbert scheme: There is a closed subscheme $\textup{Hilb}^{2n+2,n+1}(X)$ of $\textup{Hilb}^{2n+2}(X)\times \textup{Hilb}^{n+1}(X)$ that parameterizes a pair of subschemes $(Z_1,Z_2)$ of $X$ with $Z_2\subseteq Z_1$ and has Hilbert polynomials $p_n(Z_1)=2n+2$ and $p_n(Z_2)=n+1$. So there is an inclusion $\widetilde{H(X)}\hookrightarrow \textup{Hilb}^{2n+2,n+1}(X)$ and the image parameterizes a pair $(Z,L)$ with $Z\in H(X)$ and $L\subseteq Z$ a reduced line. The branched double cover $\widetilde{H(X)}\to H(X)$ is induced by projecting to the first coordinate.
\end{remark}

\begin{remark}\normalfont (Relation to Hilbert scheme of two points)
As a consequence of Theorem \ref{main_theorem}, the Hilbert scheme of a pair of skew lines $H(X)$ is isomorphic to the Hilbert scheme of two points $F^{[2]}$ on the Fano surface of lines $F$. Similarly, $\widetilde{H(X)}$ is isomorphic to the nested Hibert scheme of points $F^{[2,1]}$. 
\end{remark}

\subsection{Strategy of the Proof}
We prove Theorem \ref{main_theorem} in two steps.\\
\textbf{Step 1.} Construct a bijective morphism $\delta:\Bl_{\Delta_F}\Sym^2F\to H(X)$.\\
\textbf{Step 2.} Show $\delta$ is an isomorphism.

Note that our strategy is similar to the strategy to prove Theorem \ref{thm_CCN} in the case of projective spaces (cf. \cite[Thm 1.1]{CCN}). For them, the difficulty lies in Step 2 and requires careful study of deformation theory at a point where $H(\mathbb P^3)$ and the other component $H'(\mathbb P^3)$ intersect.  For us, however, Step 2 will follow from the smoothness of $H(\mathbb P^4)$. Instead, the hard work lies in Step 1. This requires a classification of all subschemes of $X$ of the four types on $X$ and an understanding of their deformation.


The rest of the sections are devoted to proving Theorem \ref{main_theorem}. We will first classify all subschemes of $X$ of type (I)-(IV). In particular, we will find a parameterization of type (II) and (IV) subschemes (Proposition \ref{Prop_type1_P1family}, \ref{Prop_type2_P1family}) using the geometry of normal bundle $N_{L|X}$. In \ref{subsection_Proof1}, we will complete Step 1 based on Beauville's result (cf. Lemma \ref{Lemma_BlTheta}). In \ref{subsection_Step2}, we will complete Step 2 by showing the smoothness of $H(X)$.

\subsection{Type (I) and type (III) schemes} \label{subsection_I,III}
Now we study the subschemes of $X$ parameterized by $H(X)$. First of all, type (I) schemes have the form $L_1\cup L_2$ when $L_1$ and $L_2$ are a pair of disjoint lines on $X$. For the type (III) scheme, we have the following result.
 
\begin{lemma} \label{typeIII_lemma} When $L_1$ and $L_2$ intersect at one point $x\in X$, there is a unique type (III) scheme $Z\in H(X)$ supported on $L_1\cup L_2$, and
$$Z=L_1\cup L_2\cup Z_{x,T_pX},$$
where $Z_{x,T_pX}$ is the scheme associated with the ideal $I_p^2$, where $I_p$ is the ideal on the reduced point $p$ in the hyperplane $T_pX$.

\end{lemma}
\begin{proof}
A type (III) subscheme $Z$ of $\mathbb P^4$ is a union $Z=Z_{\textup{red}}\cup Z_{x,H}$, where $Z_{\textup{red}}=L_1\cup L_2$ is the reduced scheme and $Z_{x,H}$ is an embedded point supported on $x$ and contained in a hyperplane $H\in (\mathbb P^4)^*$. Assume a pair of incidental lines $L_1\cup L_2$ is contained in $X$, the condition that $Z_{x,H}$ is a closed subscheme of $X$ is equivalent to the hyperplane $H$ is tangent to $X$ at $x$, i.e., $H=T_xX$.
\end{proof}

\begin{corollary}\label{HC_offdiagonal}
The Hilbert-Chow morphism $$\rho:H(X)\to \textup{Sym}^2F$$ is isomorphic over $\textup{Sym}^2F\setminus \Delta_F$.
\end{corollary}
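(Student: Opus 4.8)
The plan is to stratify the open set $\textup{Sym}^2F\setminus \Delta_F$ according to whether the two lines are skew or incident and to check on each stratum that $\rho$ is an isomorphism. A point of $\textup{Sym}^2F\setminus \Delta_F$ is an unordered pair $\{L_1,L_2\}$ of \emph{distinct} lines on $Y$, and such a pair is either skew or meets in a single point; write $D_F\subseteq \textup{Sym}^2F$ for the locus of incident pairs, so that $\textup{Sym}^2F\setminus(\Delta_F\cup D_F)$ is the skew locus and $D_F\setminus \Delta_F$ the incident one.

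Over the skew locus I would argue directly from diagram $(\ref{HilbertChow})$. By Proposition $\ref{H4_blowup}$ with $m=4$, the Hilbert--Chow morphism $\rho_4$ is a composite of blow-ups whose centres are $\Delta$ and the strict transform of the incident locus $D\subseteq \textup{Sym}^2Gr(2,5)$, so away from $\Delta\cup D$ it is an isomorphism: the only subscheme of $\mathbb P^4$ with Hilbert polynomial $2n+2$ supported on a pair of skew lines is the reduced one. Since $H(Y)\hookrightarrow H_4$ sits over $\textup{Sym}^2F\hookrightarrow \textup{Sym}^2Gr(2,5)$, restricting this isomorphism shows that $\rho$ is an isomorphism over the skew locus of $\textup{Sym}^2F$.

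The incident locus carries the real content. Over a pair of distinct incident lines in $\mathbb P^4$ the fibre of $\rho_4$ is the pencil $\mathbb P^1$ of hyperplanes $\mathbb P^3$ containing the plane they span, so $\rho_4^{-1}(D\setminus \Delta)$ is a $\mathbb P^1$-bundle of type (III) schemes. For lines $L_1,L_2\subseteq Y$ meeting at $p$, Lemma $\ref{typeIII_lemma}$ singles out exactly one member of this pencil, namely $H=T_pY$, whose type (III) scheme lies in $Y$. Thus $\{L_1,L_2\}\mapsto Z_{\textup{red}}\cup Z_{p,T_pY}$ is a section $s$ of $\rho_4$ over $D_F\setminus \Delta_F$ whose image is set-theoretically all of $H(Y)$ above this locus, since every such scheme is a flat limit of skew pairs and the lemma leaves no other type (III) scheme over a given incident support. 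A section of a separated morphism is a closed immersion, so $s$ identifies $D_F\setminus \Delta_F$ with a reduced subvariety sitting inside $\rho^{-1}(D_F\setminus \Delta_F)$.

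To finish, and to handle the two strata uniformly, I would invoke Zariski's main theorem. The analysis shows $\rho$ is bijective over $\textup{Sym}^2F\setminus \Delta_F$; it is proper and birational, and the target is normal because the swapping involution on $F\times F$ is free off the diagonal, so $\textup{Sym}^2F\setminus \Delta_F$ is smooth. A proper birational morphism with finite fibres onto a normal variety is an isomorphism, which gives the corollary. The delicate step is exactly the incident locus: the genuine task is to upgrade the set-theoretic bijection of Lemma $\ref{typeIII_lemma}$ to a scheme-theoretic isomorphism, i.e.\ to rule out nonreduced or embedded structure of $H(Y)$ over $D_F\setminus \Delta_F$. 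The section picture makes this transparent, exhibiting $H(Y)$ over the incident locus as cut out of the $\mathbb P^1$-bundle by the single condition $H=T_pY$, while normality together with Zariski's main theorem supplies the clean formal conclusion.
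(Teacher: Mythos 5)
Your argument is essentially the paper's own proof: the paper also establishes bijectivity of $\rho$ over $\textup{Sym}^2F\setminus\Delta_F$ (trivially on the skew locus, and via Lemma \ref{typeIII_lemma} on the incident locus) and then concludes by Zariski's main theorem. Your additional remarks — the $\mathbb P^1$-pencil of type (III) schemes in $\mathbb P^4$ cut down by the condition $H=T_pY$, and the normality of the target off the diagonal — are correct elaborations of the same route rather than a different one.
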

Note that a similar result is obtained in \cite[Lemma 12.16]{CG} using the analytic method.
\begin{proof}
$\rho$ sends the set of type (I) and (III) schemes to $\textup{Sym}^2F\setminus \Delta_F$, which is bijective by Lemma \ref{typeIII_lemma}. So by Zariski's main theorem, it is an isomorphism.
\end{proof}

\subsection{Type (II) and Type (IV) schemes}\label{subsection_II,IV}
Next, we start to discuss the schemes parameterized by $H(X)$ and supported on a single line, that is, the type (II) and type (IV) schemes. It turns out that they correspond to the first and the second types of lines on cubic threefold.

The following is a consequence of Proposition \ref{prop_line12type_Prelim}.

\begin{lemma} \label{pure_type}
There is a correspondence:
$$\textup{line of first type}\Longleftrightarrow \textup{type (II) scheme},$$
$$\textup{line of second type}\Longleftrightarrow \textup{type (IV) scheme}.$$

In other words, if $Z\in H(X)$ is a scheme supported on a line $L$, then $L$ is of the first type if and only if $Z$ is of type (II); $L$ is of the second type if and only if $Z$ is of type (IV).
\end{lemma}

The following argument is a generalization of Proposition \ref{prop_line12type_Prelim}
\begin{proposition} \label{quadric_surfaces_lemma}
Let $L$ be a line of the first type on $X$. Then there is a $\mathbb P^1$-family of smooth quadric surfaces in $\mathbb P^4$ tangent to $X$ along $L$.

 More precisely, there is a one-to-one correspondence
\begin{align}\label{Eqn_TypeIIQuadric}
    \mathbb PH^0(N_{L|X})&\leftrightarrow L\leftrightarrow \text{smooth quadric surfaces tangent to}~X~\text{along}~L.\\
    v&\leftrightarrow x \leftrightarrow Q_{x}\subseteq T_xX\nonumber,
\end{align}
where $v\in \mathbb PH^0(N_{L|X})$ associates a hyperplane $H_v$ tangent to $X$ at a unique point $x\in L$.  $H_v=T_xX$ contains a quadric surface $Q_x$ such that $TQ_{x}|_{L}\subseteq TX|_{L}$.

\end{proposition} 
The idea is the following. Each section $v\in H^0(N_{L|X})$ can be identified with a line $L_v$ in $\mathbb P^4$ disjoint from $L$. By scaling $v$, we get a ruling of lines that swept out a quadric surface. In fact,  this is implicit in \cite[Lem. 6.18]{CG}. We provide a self-contained proof here.

\begin{proof}

We choose the equation of $L$ and $X$ as in the proof of Proposition \ref{prop_line12type_Prelim}. A section $v\in H^0(N_{L|X})$ corresponds to a line $L_{v}$ in $\mathbb P^4$ via
$$N_{L|X}\cong \mathcal{O}_{L}\oplus \mathcal{O}_L\hookrightarrow \mathcal{O}_L(1)\oplus \mathcal{O}_L(1)\oplus \mathcal{O}_L(1)\cong N_{L|\mathbb P^4}.$$

By linearizing the equation of $F$ in $Gr(2,5)$ \cite[(6.14)]{CG}, we find $L_{v}$ is parameterized by
\begin{equation}
    \lambda[1:0:0:a:-b]+\mu[0:1:-a:b:0],~[\lambda:\mu]\in \mathbb P^1, \label{LvParam}
\end{equation}
for some $a,b\in \mathbb C$ not both zero. 

$L$ and $L_v$ span a hyperplane 
\begin{equation} \label{eqn_Hv}
    H_v=a^2x_4+b^2x_2+abx_3=0.
\end{equation}
Note this is the tangent hyperplane $T_{[b:a]}X$ at point $[b:a]\in L$.

Now, let's find the equation of quadric surface swept out by \eqref{LvParam} as we scale $(a,b)$ by $s\in \C^*$. The line $L_{sv}$ satisfies three equations 
$$-sax_0-sbx_1+x_3=0,\ sax_1+x_2=0,\ sbx_0+x_4=0.$$

By canceling out the factor $s$, we find one linear equation \eqref{eqn_Hv} and two quadric equations.
\begin{equation}\label{quadric_surfaces}
    \begin{cases}
    Q_1=bx_1x_2+ax_1x_3+ax_0x_2=0,\\
    Q_2=ax_0x_4+bx_0x_3+bx_1x_4=0.
    \end{cases}
\end{equation}

Note that when $a$ and $b$ are both nonzero, $H_v=Q_1=0$ and $H_v=Q_2=0$ are equivalent. To see it defines a smooth quadric surface, for example, when $a\neq 0$, $Q_1$ defines a cone over a smooth quadric surface. Since $H_v=0$ does not pass through the cone point, it cuts a smooth quadric section. When $a=0$, \eqref{eqn_Hv} and $Q_2=0$ define the same quadric surfaces as in \eqref{eqn_quadricsurface1}. Finally, $H_v$ coincides with the tangent hyperplane $T_{[b:a]}X$ at point $[b,a]\in L$. So \eqref{eqn_Hv} and \eqref{quadric_surfaces} define a $\mathbb P^1$-family of smooth quadric surface in $\mathbb P^4$ tangent to $X$ along $L$.
\end{proof}

One should note that for each normal direction $v$, the choice of the quadric surface is not unique. (For example, one can wiggle the quadric surface \eqref{eqn_quadricsurface1} by adding an arbitrary quadratic term in $x_3$ and $x_4$.) However, they agree to the first order along $L$ with respect to the parameters in the normal direction. Our choices \eqref{Eqn_TypeIIQuadric} are canonical in the sense that they have vanishing second-order data. This suffices our need to accommodate type (II) schemes. 

\begin{proposition}\label{Prop_type1_P1family}
   Let $L$ be a line of the first type on $X$. Then the set of type (II) subschemes of $X$ supported on $L$ is parameterized by $\mathbb PH^0(N_{L|X})\cong \mathbb P^1$.
\end{proposition}
\begin{proof}
   The subscheme is to take first-order infinitesimal neighborhood $Z_{L,Q_x}$ of $L$ in the quadric surface $Q_x$, with $x\in L$. Conversely, any type (II) subscheme of $X$ supported on $L$ is represented by the first-order neighborhood $Z_{L,Q}$ of $L$ in a smooth quadric surface $Q$ tangent to $X$ along $L$. Then $Q$ spans a hyperplane, which is tangent to a point $x\in L$. Then $Q$ agrees with $Q_x$ to the first order. So $Z_{L,Q}$ coincides with $Z_{L,Q_x}$. \end{proof}

\begin{proposition}\label{Prop_type2_P1family}
    Let $L$ be a line of second type on $X$. Then the set of type (IV) subschemes of $X$ supported on $L$ are parameterized by $\mathbb PH^0(N_{L|X})$. 
\end{proposition}
\begin{proof}
   A line $L$ of the second type has a unique plane $\mathbb P$ tangent to $X$ along $L$ (cf. Proposition \ref{prop_line12type_Prelim} (2)). Then the first-order infinitesimal neighborhood $Z_{L,P}$ of $L$ in $\mathbb P$ is contained in $X$. By varying the embedded point $Z_{p,T_pX}$ supported on $p\in L$ as the first-order infinitesimal neighborhood in the tangent hyperplane $T_pX$, we have a $\mathbb P^1$-family of type (IV) subschemes $Z_{L,P}\cup Z_{p,T_pX}$ of $X$ supported on $L$. Note that $p\in L$ can be also parameterized by vanishing point of section $v\in \mathbb PH^0(\mathcal{O}(1))$. The first part of the claim follows.
   
   Conversely, since $Z_{L,P}$ is determined by the normal bundle $N_{L|X}$; the embedded point supported on $p\in L$ has to be contained in $T_pX$. Hence these are the only type (IV) subschemes.
\end{proof}

Therefore, the fiber of Hilbert-Chow morphism $H(X)\to \Sym^2F$ over a closed point on the diagonal is identified to $\mathbb P^1$, at least set theoretically.


\subsection{Proof for Step 1} \label{subsection_Proof1}
Recall that Beauville's result on the extended Abel-Jacobi map on the blow-ups \eqref{diagram_blowupAJ} implies $\Bl_{\Delta_F}(F\times F)\to (\mathbb P^4)^*$ is a morphism \eqref{AJblowupDiagram}. This provides a continuous way of assigning a hyperplane. This rigidifies the parameterization of type (II)/(IV) scheme over the diagonal.

\begin{proposition}\label{bijective}
There is a bijective morphism 
\begin{equation}\label{eqn_delta}
   \delta:\textup{Bl}_{\Delta_F}\textup{Sym}^2F\to H(X). 
\end{equation}
\end{proposition}

\begin{proof}
First, $\delta$ is rational and is defined on the locus of pairs of skew lines by assigning $(L_1,L_2)\mapsto \mathcal{O}_{L_1\cup L_2}$. As a consequence of Corollary \ref{HC_offdiagonal}, $\delta$ is regular and bijective on the complement of the exceptional divisor. We aim to show that it extends to a morphism.

Since $\tilde{\Phi}$ in \eqref{AJblowupDiagram} is independent of the order of the two lines, it factors through the $\mathbb Z_2$-quotient $$\tilde{\Phi}':\textup{Bl}_{\Delta_F}\textup{Sym}^2F\to (\mathbb P^4)^*$$ and provides a continuous way to assign hyperplanes. 

Denote $\pi:H(X)\to (\mathbb P^4)^*$ the natural projection. Then we have the composition $\pi\circ\delta=\tilde{\Phi}'$ is regular. Using the properties of the dual map along lines of cubic threefold \eqref{DualMap}, there are at most two points on the exceptional divisor supported on the same line and has the same image under $\tilde{\Phi}'$, indicating that the projection $\overline{\Gamma(\delta)}\to \textup{Bl}_{\Delta_F}\textup{Sym}^2F$ from the graph closure $\overline{\Gamma(\delta)}$ of $\delta$ is finite, and thus an isomorphism by Zariski's main theorem. It follows that $\delta$ is a morphism. Finally, injectivity can be checked on each fiber of the exceptional divisor over $\Delta_F$ using Proposition \ref{Prop_type1_P1family} and \ref{Prop_type2_P1family}.
\end{proof}

\begin{remark}\normalfont
In fact, one can show that the parameterization of the type (II) scheme in Proposition \ref{Prop_type1_P1family} and the type (IV) scheme in Proposition \ref{Prop_type2_P1family} (up to taking a "conjugation") are continuous, with respect to the degeneration of the normal bundle $\mathcal{O}\oplus\mathcal{O}$ of line of the first type to $\mathcal{O}(1)\oplus\mathcal{O}(-1)$ of line of the second type \cite[P. 144-148]{YZ_thesis}. This provides an alternative proof of Proposition \ref{bijective} without invoking the regularity of \eqref{AJblowupDiagram}. 
\end{remark}

\subsection{Proof for Step 2}
\label{subsection_Step2}
Now, we are ready to prove the main theorem. Note that the bijective morphism \eqref{eqn_delta} will be an isomorphism if $H(X)$ is normal, as a consequence of Zariski's main theorem. However, this is not known a priori. (This is also the difficulty of proving the smoothness of $H(\mathbb P^3)$ in \cite{CCN}.) In principle, it is possible that $H(X)$ is highly singular and $\delta$ is the normalization map, e.g. the normalization of a cuspidal curve or the dual map of a smooth hypersurface. Note that the tangent map of these examples fails to be of maximal rank somewhere.

For us, it suffices to prove that the tangent map of $\delta$ is everywhere of maximal rank, or equivalently, $\delta$ is an immersion. We will use the fact that $H(X)$ is a closed subscheme of $H(\mathbb P^4)$, where the latter is smooth.\\

\noindent\textit{Proof of Theorem \ref{main_theorem}.}
It suffices to show that the bijective morphism \eqref{eqn_delta} is an isomorphism. By the discussion above, it suffices to show that $\delta$, composed with the closed immersion $i:H(X)\hookrightarrow H(\mathbb P^4)$ is an immersion, namely, of maximal rank at each point.

By expressing $H(\mathbb P^4)\to \textup{Sym}^2Gr(2,5)$ as successive blow-ups as in Proposition \ref{H4_blowup}, we have the following commutative diagram.

\begin{figure}[ht]
    \centering
\begin{equation*} 
\begin{tikzcd}
& H(\mathbb P^4) \arrow[d,"\sigma_2"] \\
\textup{Bl}_{\Delta_F}\textup{Sym}^2F \arrow[ur,"i\circ\delta"] \arrow[r,"\phi"] \arrow[d] & \textup{Bl}_{\Delta}\textup{Sym}^2Gr(2,5) \arrow[d,"\sigma_1"]\\
\textup{Sym}^2F\arrow[r]&\textup{Sym}^2Gr(2,5)
\end{tikzcd}
\end{equation*}
\end{figure}{}
$\phi$ is the unique map that extends the rational map $\textup{Sym}^2F\dashrightarrow \textup{Bl}_{\Delta}\textup{Sym}^2Gr(2,5)$ induced by the universal property of the blow-up (Cor. II.7.15, \cite{Hartshorne}). Also note that the two sides of $\phi$ are the Hilbert schemes of two points $F^{[2]}$ on $F$, and $Gr(2,5)^{[2]}$ on $Gr(2,5)$, respectively, so $\phi$ is identified with the inclusion of smooth varieties $$F^{[2]}\hookrightarrow Gr(2,5)^{[2]}.$$ 

It follows that $\sigma_2\circ i\circ \delta=\phi$ is an immersion, so $i\circ\delta$ has to be an immersion.
\qed

\section{Restriction to a Hyperplane Section}\label{Section_5.HS_CubicSurface}

In the last section, we characterize the Hilbert scheme of a pair of skew lines $H(X)$ on a smooth cubic threefold $X$. We studied the projection map 
\begin{equation}\label{eqn_piH(X)->P4*}
    \pi: H(X)\to (\BP^4)^*,
\end{equation}
which sends $Z\in H(X)$ to the unique hyperplane that contains $Z$.

Note that this provides an interpretation of $H(X)$ as a relative Hilbert scheme. Let $\mathcal{X}=\{(x,t)\in X\times (\mathbb P^4)^*|x\in X\cap H_t\}$ and $pr_i$ the projection to the $i$-th coordinate. Then $pr_2:\mathcal{X}\to (\mathbb P^4)^*$
is the universal family hyperplane sections of $X$ with the relative ample bundle $pr_1^*(\mathcal{O}(1))$. By Grothendieck, there is a projective scheme $\textup{Hilb}^{2n+2}(\mathcal{X}/(\mathbb P^4)^*)$ over $(\mathbb P^4)^*$ and parameterizes the universal flat family of subschemes of $X$ over $(\mathbb P^4)^*$ with Hilbert polynomial $2n+2$. Our first observation is the following
\begin{proposition}\label{Prop_RelHS}
   The map \eqref{eqn_piH(X)->P4*} defines $H(X)$ as an irreducible component of the relative Hilbert scheme $\textup{Hilb}^{2n+2}(\mathcal{X}/(\mathbb P^4)^*)$ that contains a pair of skew lines in a general hyperplane section of $X$.
\end{proposition}

Our aim is to better understand the fibers of $\pi$. Note that the fiber $\pi^{-1}(H)$ naturally corresponds to subschemes of the cubic surface $X\cap H$. Therefore, we would like to understand when a pair of lines on $S$ supports type (I)-(IV) subschemes, and how it relates to the singularities and singularities of the cubic surface.

Since every hyperplane section of a smooth cubic threefold is normal (Lemma \ref{Lemma_NormalHyperplane}), we will focus on the study of lines on normal cubic surfaces.

\begin{definition}\normalfont

Let $S\subseteq \mathbb P^3$ be a normal cubic surface. Recall that $H(\mathbb P^3)$ is the Hilbert scheme of a pair of skew lines on $\mathbb P^3$ (cf. Definition \ref{Def_Hm}). We denote $H(S)$ the scheme-theoretic intersection
$$\textup{Hilb}^{2n+2}(S)\cap H(\mathbb P^3).$$
\end{definition}
Informally, this is the "Hilbert scheme" of pairs of skew lines on the cubic surface $S$.

When $S$ is smooth, $H(S)$ consists of $216$ reduced points. This is because, for each of the 27 lines in a smooth cubic surface, there are 16 lines disjoint to it, so there are $27\times 16/2=216$ disjoint pairs.

When $S$ is "mild" singular, $H(S)$ is nonreduced but still has length 216. It is identified with a fiber of \eqref{eqn_piH(X)->P4*} and should contain type (II), (III), (IV) in a subtle way. We want to understand the relation between the scheme $H(S)$ and singularities on $S$.
\subsection{Lines on Normal Cubic Surfaces}
\begin{lemma}\label{Lemma_NormalHyperplane}
    Let $X$ be a smooth cubic threefold, then any hyperplane section $X\cap H$ is normal.
\end{lemma}
\begin{proof}
This is due to the classifications of cubic surfaces. According to Theorem 9.2.1 in \cite{Dolgachev}, a non-normal cubic surface is either cone over a singular cubic curve or projective equivalent to $t_0^2t_2+t_1^2t_3=0$, or 
$t_2t_0t_1+t_3t_0^2+t_1^3=0$. In either case, $S$ has to be singular along a line $L$. Now assume $S$ is the hyperplane section $t_4=0$, then $X$ has defining equation $$F_i(t_0,...,t_3)+t_4Q(t_0,...,t_4)=0,$$ 
with $F_i(t_0,...,t_3)$ the defining equation of $S_i$ and  $Q(t_0,...,t_4)$ a homogeneous quadric. Then by taking the partial derivatives and restricting them to the line, one finds that $X$ is singular at the intersection between the line $L$ and the quadric surface $Q(t_0,...,t_3,0)=0$, which contradicts that $X$ is smooth.
\end{proof}

Alternatively, one can use Serre's criterion for normality, as in \cite[Prop. 3.1]{BBF+}.

A normal cubic surface $S$ has only isolated singularities. The relation between the singularities and the number of lines on a cubic surface is a classical result. The following result is due to classifications of cubic surface \cite{Bruce-Wall} and \cite[section 9.2.2]{Dolgachev}).
 
\begin{lemma}\label{classicication of cubic surfaces}
Let $S$ be a normal cubic surface, then

(i) $S$ has at worse ADE singularities and has at most 27 lines, or

(ii) $S$ has an elliptic singularity and has a one-parameter family of lines.
\end{lemma}

 When $S$ has ADE singularities, there are at most 27 lines. The more singular $S$ is, the fewer number of lines $S$ has. For example, a cubic surface with an $A_1$ singularity has 21 lines, a cubic surface with three $A_2$ singularities has 3 lines (cf. Example \ref{Example_A1}, \ref{Example_3A2}), while a cubic surface with an $E_6$ singularity has only 1 line.

\begin{remark}\normalfont
Classically, Cayley \cite{Cayley} gave an account for the number "27" for cubic surfaces with ADE singularities. Each line passing through a singularity has a multiplicity of at least two, and the number "27" is interpreted as the number of lines counted with multiplicities. For example, a cubic surface with an $A_1$ singularity has 6 lines through the singularity and 15 lines away from the singularity (cf. Example \ref{Example_A1}). The 6 lines have multiplicity two, so the total number is $6\times 2+15=27$. As another example, a cubic surface with three $A_2$ singularities has only three lines (cf. Example \ref{Example_3A2}). Each line has multiplicity nine, so $3\times 9=27$.
\end{remark}

\begin{remark}\normalfont\label{remark_27lines}
One can interpret lines on a cubic surface $S$ with ADE singularities using the Hilbert scheme $\textup{Hilb}^{n+1}(S)$, then $\textup{Hilb}^{n+1}(S)$ has length 27. This is pointed out by Will Sawin. Let $\mathfrak{S}$ be the tautological subbundle on $Gr(2,4)$. Then a cubic surface $S$ determines a section to $\textup{Sym}^3\mathfrak{S}$ and its zero scheme is the Hilbert scheme of lines $\textup{Hilb}^{n+1}(S)$. It has dimension zero as long as $S$ has at worst ADE singularities by Lemma \ref{classicication of cubic surfaces}. Therefore, by the miracle flatness theorem, $\textup{Hilb}^{n+1}(S)$ has constant length 27. This idea is also studied by Tziolas in \cite{Tziolas} and \cite[Example 1.1 (b)]{Tziolas2}.   
\end{remark}

By Lemma \ref{classicication of cubic surfaces}, we conclude

\begin{corollary} \label{fin_line_cor} 
Let $S=X\cap H$ be a hyperplane section of a smooth cubic threefold. Then $S$ has only finitely many lines except when $H=T_pX$ is a tangent hyperplane of $X$ at an Eckardt point $p\in X$. In particular, when $X$ is general, all hyperplane sections $S$ of $X$ have only finitely many lines.
\end{corollary}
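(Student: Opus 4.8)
The plan is to deduce the statement directly from the dichotomy established in Lemma~\ref{classicication of cubic surfaces} together with the characterization of Eckardt points in Lemma~\ref{Eckardt Lemma}. By Lemma~\ref{classicication of cubic surfaces}, any hyperplane section $S = Y \cap H$ is normal and falls into exactly one of two classes: either $S$ has at worst rational double points, in which case it carries at most $27$ lines, or $S$ is a cone over a smooth plane cubic curve, in which case it carries a one-parameter family of lines. Thus $S$ has infinitely many lines if and only if it lies in the second class, and the entire problem reduces to recognizing when a hyperplane section of $Y$ is such a cone.

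First I would argue that the cone case forces $H$ to be tangent to $Y$. If $S = Y \cap H$ is a cone over a smooth plane cubic, then it has a unique isolated (elliptic) singularity at its vertex $p$. Since $Y$ is smooth, a hyperplane section is singular at a point $p$ precisely when $H$ is the embedded tangent hyperplane $T_pY$; hence $H = T_pY$. Invoking the second assertion of Lemma~\ref{Eckardt Lemma}, the condition that $T_pY \cap Y$ be a cone over a smooth plane cubic is exactly the condition that $p$ be an Eckardt point. Conversely, if $p$ is an Eckardt point, then by the same lemma $T_pY \cap Y$ is a cone over a smooth plane cubic and so carries infinitely many lines. This establishes the claimed equivalence: $S$ has infinitely many lines if and only if $H = T_pY$ for an Eckardt point $p$.

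The main point requiring care is the identification of the cone's vertex as the tangency locus, that is, verifying that the elliptic singularity of $S$ is genuinely forced to sit at a point where $H$ is tangent to $Y$, rather than arising from some other degeneration compatible with $S \subseteq Y$; this is the only step not handed to us verbatim by the two preceding lemmas. Once it is in hand, the final assertion is immediate: by Lemma~\ref{Eckardt Lemma} a general cubic threefold $Y$ has no Eckardt points, so no hyperplane section can be a cone over a smooth plane cubic, and therefore every hyperplane section of a general $Y$ has only finitely many lines.
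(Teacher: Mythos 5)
Your proposal is correct and follows exactly the route the paper intends: the paper derives this corollary immediately from Lemma~\ref{classicication of cubic surfaces} and Lemma~\ref{Eckardt Lemma} without further comment, and your write-up simply supplies the routine intermediate step (the vertex of the cone is a singular point of $S$, hence a tangency point of $H$, which is standard for hyperplane sections of a smooth hypersurface). No gaps.
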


\subsection{A Pair of Skew Lines on Cubic Surface} Note that a normal cubic surface $S$ can be embedded into a smooth cubic threefold as a hyperplane section. So we can always assume that $S=X\cap H$ for some smooth cubic threefold. Typically, $S$ is singular exactly when $H$ is tangent to some point $p$ on $X$. Since every line $L$ on $X$ passing through $p$ is contained in $T_pX$, it is contained in $S$ as well.

We want to characterize $H(S)_{\textup{red}}$. This requires analyzing how the schemes of the four types are supported on the pair of lines of $S$ in different configurations.

\begin{proposition} \label{III_sur_prop}
  Let $L_1$ and $L_2$ be two lines on a normal cubic surface $S$.  \\
(1) If $L_1\cap L_2=\emptyset$, $L_1\cup L_2$ defines a type (I) subscheme of $S$;\\
(2) If $L_1\cap L_2=\{p\}$ and $p$ is a singularity of $S$, then  $L_1\cup L_2$ supports a unique type (III) subscheme of $S$. Conversely, the support of the embedded point of a type (III) subscheme of $S$ is always a singularity of $S$.
\end{proposition}

\begin{proof} 
Write $S=X\cap H$ as a hyperplane section. Then $S$ is singular at $p$ if and only if $H=T_pX$.
By Lemma \ref{typeIII_lemma}, $X$ contains a unique type (III) subscheme $Z$ supported on $L_1\cup L_2$ and $Z$ is contained in the tangent hyperplane $T_pX$. Therefore, $Z$ lies in $S$ iff $H=T_pX$ iff $p\in S$ is singular. 
\end{proof}

\begin{proposition}\label{Prop_cubicSurface12type}
Let $L\subseteq S$ be a line and $L\cap S^{\textup{sing}}\neq \emptyset$. Then there is a type (II) (resp. (IV)) subscheme $Z$ of $S$ supported on $L$ if and only if the torsion-free part of the normal sheaf $N_{L|S}$ is isomorphic to $\mathcal{O}_L$ (resp. $\mathcal{O}_L(1)$). Conversely, if $L$ supports a type (II)/(IV) subscheme of $S$, $L$ must pass through a singularity of $S$.
\end{proposition}
\begin{proof}
    Write $S=X\cap H$ as before. There exists a type (II)/(IV) subscheme $Z$ of $S$ supported on $L$ if and only if there exists such a type (II)/(IV) subscheme of $X$ contained in the hyperplane $H$. Now if $L$ as a line of $X$ is of the first type, then $L$ supports a type (II) subscheme $Z_{L,Q_p}$ of $X$ which is contained in $T_pX$ for some $p\in L$ (cf. Proposition \ref{quadric_surfaces_lemma}). In particular, $S$ contains $Z_{L,Q_p}$ if and only if $H=T_pX$ if and only if $S$ is singular at $p\in L$. In this case, the normal bundle $N_{L|Q}\cong \mathcal{O}_{L}$ embeds into $N_{L|S}$ as a subbundle. Similar to the type (IV) schemes.
\end{proof}
Compare to Definition \ref{Def_Line12type_cubic3fold}, we can call a line $L\subseteq S$ on cubic surface to be of the \textit{first type} (resp. \textit{second type}) if the torsion-free part of $N_{L|S}$ is $\mathcal{O}_L$ (resp. $\mathcal{O}_L(1)$). This is equivalent to that $L$ is a line of the first type (resp. second type) on the corresponding cubic threefold $X$.

Also, as a consequence of Proposition \ref{quadric_surfaces_lemma} and Proposition \ref{Prop_type2_P1family}, a line $L\subseteq S$ of the first type supports a unique type (II) subscheme, while a line of the second type supports (generically) two type (IV) subschemes.

\begin{example} \label{Example_A1}
\normalfont ($A_1$ singularity) Let $S_{A_1}$ be a cubic surface with a single singularity of type $A_1$. Then it has 21 lines, with 15 lines away from the nodes and 6 lines $L_1,...,L_6$ passing through the node. There are 120 disjoint pairs of lines corresponding to the type (I) scheme; Each $L_i$ supports a unique type (II) scheme structure, and there are 6 such lines. The union $L_i\cup L_j$ with $i\neq j$ supports a unique type (III) structure with the embedded point supported at the intersection point, and there are 15 such pairs. $$|H(S_{A_1})_{\textup{red}}|=120+6+15+0=141.$$
\end{example}

\begin{example} \label{Example_3A2}
\normalfont (3$A_2$ singularities) Let $S_{3A_2}$ be defined by $xyz=w^3$. Then it has only 3 lines $x=w=0,y=w=0,z=w=0$ with each line passing through two of the three $A_2$ singularities $[1,0,0,0]$, $[0,1,0,0]$, $[0,0,1,0]$. Each line supports type (IV) schemes in two different ways: The embedded point can be supported on either of the two singularities that the line passes through. $S_{3A_2}$ also contains type (III) schemes supported on every pair of the three lines with the embedded point supported at the intersection point. $$|H(S_{3A_2})_{\textup{red}}|=0+0+3+6=9.$$
\end{example}

\begin{example} \label{Example_elliptic}
\normalfont (elliptic singularity) Let $S_E$ be a cone over a smooth cubic curve $E$. Each line only supports type (IV) scheme in a unique way: The embedded point is supported at the cone point. Moreover, each pair of distinct lines intersects at the cone point, so their union supports a type (III) scheme. $$H(S_E)_{\textup{red}}\cong \textup{Sym}^2E.$$
\end{example}

\begin{figure}[h]
     \centering
     \begin{subfigure}[b]{0.2\textwidth}
         \centering
         \includegraphics[width=\textwidth]{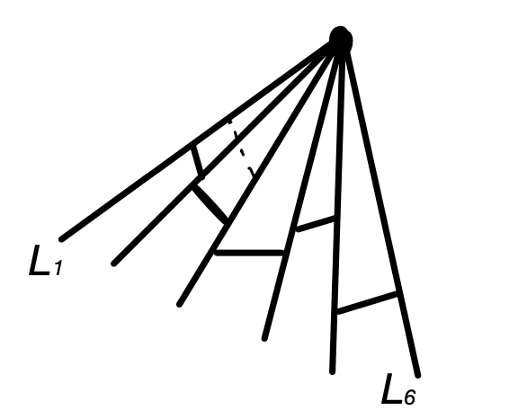}
         \caption*{$A_1$ singularity}
         \label{fig:A1}
     \end{subfigure}
     \hspace{2em}
     \begin{subfigure}[b]{0.2\textwidth}
         \centering
         \includegraphics[width=\textwidth]{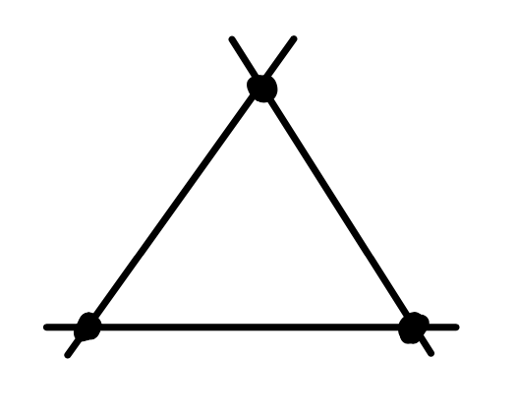}
         \caption*{3$A_2$ singularities}
         \label{fig:3A2}
     \end{subfigure}
      \hspace{2em}
   \begin{subfigure}[b]{0.2\textwidth}
         \centering
         \includegraphics[width=\textwidth]{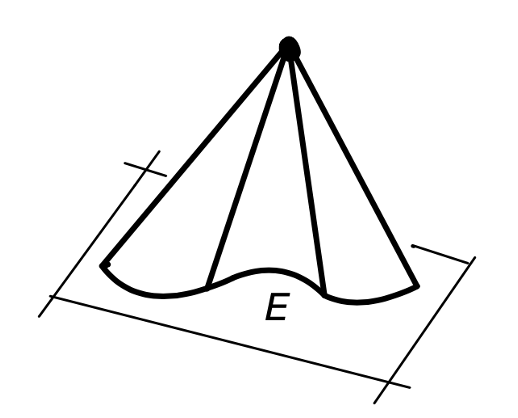}
         \caption*{elliptic singularity}
         \label{fig:cone}
     \end{subfigure}
    
        \caption{Examples of Singular Cubic Surfaces}
        \label{fig:singular_cubic_surfaces}
\end{figure}

Inspired by Remark \ref{remark_27lines}, we obtain the following result.
\begin{proposition}\label{prop_216}
    $H(S)$ is a zero-dimensional scheme and has a constant length of 216, as long as $S$ at worst ADE singularities. 
\end{proposition}
\begin{proof}
    This is due to $H(S)_{\textup{red}}$ being finite (Lemma \ref{classicication of cubic surfaces}, Proposition \ref{III_sur_prop}, \ref{Prop_cubicSurface12type}) and the miracle flatness theorem.
\end{proof}
\begin{proposition}\label{HYtoP4dual}
The morphism $$\pi:H(X)\to (\mathbb P^4)^*$$ is generically finite. The only positive dimensional fibers occur at tangent hyperplane $T_pX$ of an Eckardt point $p$ of $X$, and $\pi^{-1}(T_pX)$ is isomorphic to $\textup{Sym}^2E$, where $E$ is the elliptic curve corresponding to $p$.  
\end{proposition} 
\begin{proof}
It is a consequence of Proposition \ref{prop_216}, Corollary \ref{fin_line_cor} and Example \ref{Example_elliptic}.
\end{proof}

\begin{corollary}\label{Cor_ExtendGauss}
The extended Gauss map $\tilde{\mathcal{G}}:\textup{Bl}_0(\Theta)\to (\mathbb P^4)^*$ is generically finite. $\tilde{\mathcal{G}}^{-1}(t)$ has positive dimension if and only if
the hyperplane section $H_t\cap X$ is a cone over an elliptic curve $E$, and in which case $\tilde{\mathcal{G}}^{-1}(t)$ is isomorphic to $E$.
\end{corollary}

\begin{proof}Recall that there is a factorization \eqref{AJblowupDiagram} $$\Bl_{\Delta_F}(F\times F)\to \Bl_0(\Theta)\to (\mathbb P^4)^*.$$ 

As a consequence of Proposition \ref{HYtoP4dual} and Theorem \ref{main_theorem}, the positive dimensional fiber of $\tilde{\Phi}$ is $E\times E$ over tangent hyperplane at an Eckardt point.  We identify $E\times E\subseteq F\times F$ with its strict transform in the blow-up. It suffices to show that the image of positive dimensional fiber $\tilde{\Phi}^{-1}(t)\cong E\times E$ under the map $\tilde{\Psi}$ is isomorphic to $E$. 

The restriction of the Abel-Jacobi map \eqref{AJ} to $E\times E$ factors through
$$\Psi|_{E\times E}: E\times E\xrightarrow{f} E\xrightarrow{g} Alb(F)\cong JX,$$
where $f$ is given by $(p,q)\mapsto p-q$. The morphism $g$ is a closed immersion since it factors through $F\to Alb(F)$, which is a closed immersion \cite[Theorem 4]{Beauville2}. The last isomorphism is due to \cite[Theorem 11.19]{CG}. It follows that $\Psi(E\times E)\cong E$ is an elliptic curve, and by commutativity of \eqref{AJblowupDiagram}, the strict transform of $\Psi(E\times E)$ is contracted by $\tilde{\mathcal{G}}$.
\end{proof}
As a digression, we record the following consequence of Corollary \ref{Cor_ExtendGauss}. 

\begin{corollary}\label{Cor_GaussMapConst-on-E}
    Suppose a smooth cubic threefold $X$, then the Gauss map \eqref{Eqn_GaussMap} for the theta divisor of $JX$ is generically finite. The positive dimensional fibers are elliptic curves corresponding to Eckardt points on $X$.
\end{corollary}

\subsection{Normal Sheaf and Singularities}

As a consequence of Proposition \ref{Prop_cubicSurface12type}, if a line $L$ on $S$ is away from the singularities of $S$, then $L$ cannot support a type (II)/(IV) subschemes. On the other hand, the occurrence of singularity on $L$ changes the local geometry of $L$ near $S$ (cf. Proposition \ref{Prop_cubicSurface12type}). One may ask whether the singularities on $S$ on a line $L$ determine the type of the line.

In general, this is false. As an example, the cubic surface defined by $F=x_0x_1x_2+x_2x_3^2+x_3x_1^2$ is normal with an $A_4$ singularity at $[1:0:0:0]$ (and an $A_1$ singularity at $[0:0:1:0]$). Both of the lines $x_2=x_3=0$, $x_1=x_2=0$ only pass through the $A_4$ singularity. However, by computing the dual map along the lines, we conclude that one of the lines is of the first type and the other is of the second type.

However, we do have some deterministic results.

\begin{proposition}

(1) If a line $L\subseteq S$ passes through only one singularity which has type $A_1$, then $L$ is of the first type.

(2) If $L$ passes through more than one singularity, then $L$ is of the second type.
\end{proposition}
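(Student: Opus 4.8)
The plan is to reduce both statements to the local model for $S$ along $L$ that was set up in the proof of Proposition~\ref{1st2ndtype_prop}. Choose coordinates so that $L=\{x_2=x_3=0\}$ and write
$$F=x_2Q_0(x_0,x_1)+x_3Q_1(x_0,x_1)+(\text{terms of degree}\ge 2\text{ in }x_2,x_3),$$
with $Q_0,Q_1$ binary quadratics. The first thing I would record is that the singular points of $S$ lying on $L$ are exactly the common zeros of $Q_0$ and $Q_1$ on $L\cong\mathbb P^1$: along $L$ one has $F\equiv\partial_{x_0}F\equiv\partial_{x_1}F\equiv 0$ identically, while $\partial_{x_2}F|_L=Q_0$ and $\partial_{x_3}F|_L=Q_1$. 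Combined with the dichotomy of Proposition~\ref{1st2ndtype_prop} — for a line meeting $\operatorname{Sing}(S)$, it is of the first type iff $Q_0,Q_1$ are linearly independent (and necessarily share a common linear factor), and of the second type iff they are proportional — this reduces everything to elementary statements about a pencil of binary quadratics.

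For part (2) I would argue by contradiction. If $L$ were of the first type, then $Q_0=\ell m_0$, $Q_1=\ell m_1$ with a common linear factor $\ell$ and $m_0,m_1$ linearly independent (otherwise $Q_0,Q_1$ would be proportional). Their common zero locus on $L$ is then $\{\ell=0\}\cup(\{m_0=0\}\cap\{m_1=0\})=\{\ell=0\}$, a single point, because $m_0,m_1$ have no common zero. Thus a first-type line meets $\operatorname{Sing}(S)$ in exactly one point; a line through two or more singularities therefore cannot be of the first type, and since it passes through a singularity, Proposition~\ref{1st2ndtype_prop}(1) forces it to be of the second type.

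For part (1) I would again suppose $L$ is of the second type and contradict the $A_1$ hypothesis. Proportionality of $Q_0,Q_1$ lets me arrange $Q_1=0$ by a linear substitution in $x_2,x_3$ (which preserves $L$), so that $F=x_2Q_0+(\text{degree}\ge 2\text{ in }x_2,x_3)$. As $L$ carries a unique singular point, the binary quadratic $Q_0$ must be a perfect square $\ell^2$ — two distinct roots would give two singular points, contradicting the hypothesis directly. Placing the singular point at $[1:0:0:0]$ with $\ell=x_1$ and passing to the chart $x_0=1$ with coordinates $(u,v,w)=(x_1,x_2,x_3)$, the term $x_2Q_0=vu^2$ becomes cubic, so the entire degree-two part of the local equation is a binary form in $v,w$ alone. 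Hence the tangent cone has rank $\le 2$ (corank $\ge 1$) as a ternary quadric, whereas an $A_1$ point has nondegenerate, rank-$3$ quadratic part. This contradiction shows $L$ is not of the second type, and Proposition~\ref{1st2ndtype_prop}(1) again makes it first type.

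The only genuinely delicate point is the bookkeeping of which monomials of $F$ contribute to the degree-two part of the local equation: once one checks that in the perfect-square case $x_2Q_0$ is pushed into degree three, leaving only a binary quadratic in the transverse variables $v,w$, the corank computation — and with it the separation of $A_1$ from the strictly deeper singularities — is immediate. I expect no obstacle beyond this careful identification of the tangent cone, since its rank is an analytic invariant and may be computed in any convenient chart.
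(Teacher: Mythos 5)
Your proof is correct, but it takes a genuinely different route from the paper's in both parts. The paper proves (1) directly: it writes $S$ in the cone presentation $x_0Q(x_1,x_2,x_3)+C(x_1,x_2,x_3)=0$ adapted to the $A_1$ point, computes the dual map along $L$ as $[0,0,cx_0+ax_1,dx_0+bx_1]$, and uses the transversality of $Q$ and $C$ (which encodes the $A_1$ condition) to conclude the two linear forms are independent, so the dual map is that of a smooth quadric. You instead argue by contradiction from the second-type normal form: proportionality of $Q_0,Q_1$ plus uniqueness of the singular point forces $Q_0=\ell^2$, and then the tangent cone at the singular point is a binary quadric in the transverse variables, of rank at most $2$, which is incompatible with $A_1$. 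Your version actually proves slightly more --- a second-type line through a unique singularity forces that singularity to have degenerate Hessian, i.e.\ to be strictly worse than $A_1$ --- which is consistent with the paper's remark about the $A_4$ example. For (2) the paper embeds $S$ as a hyperplane section of a smooth cubic threefold $Y$ and invokes the 1-to-1 versus 2-to-1 behaviour of the dual map of $Y$ along $L$; you stay entirely on the surface, observing that $\mathrm{Sing}(S)\cap L=\{Q_0=Q_1=0\}$ and that a first-type line has $Q_0=\ell m_0$, $Q_1=\ell m_1$ with $m_0,m_1$ independent, so this locus is the single point $\{\ell=0\}$. Your argument is more elementary and self-contained (it needs only the local computation already carried out in the proof of Proposition \ref{1st2ndtype_prop}, not the threefold), at the cost of redoing the bookkeeping of which monomials contribute to the tangent cone; the paper's approach is shorter given the machinery on dual maps of cubic threefolds it has already established.
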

\begin{proof}
For (1), assume the line $L$ is given by $x_2=x_3=0$ and the cubic surface has an equation
$$x_0Q(x_1,x_2,x_3)+C(x_1,x_2,x_3)=0,$$
with the cubic $C=x_1^2(ax_2+bx_3)+\cdots$ and quadric $Q=x_1(cx_2+dx_3)+\cdots$ intersecting transversely at $6$ points, which guarantees that $[1:0:0:0]$ is an $A_1$ singularity. The dual map along $L$ is $\mathcal{D}|_L=[0:0:cx_0+ax_1:dx_0+bx_1]$.  The transversality condition implies that the two linear forms are linearly independent, so it corresponds to the dual map of a smooth quadric surface along $L$.

For (2), we regard $S$ as a hyperplane section of a smooth cubic threefold $X$, then use the fact that the dual map \eqref{DualMap} on $X$ along $L$ is 1-to-1 (resp. 2-to-1) when $L$ is of the first type (resp. second type).
\end{proof}

\section{Relations to Other Work}\label{Section_5.HS_Modular}
In this section, we will discuss some relations to the Bridgeland stable moduli spaces studied in \cite{APR} and \cite{BBF+}.

In \cite{APR}, the author studied the moduli space $\mathcal{M}_{\sigma}(w)$ of Bridgeland stable objects in the Kuznetsov component with Chern character $w=H-\frac12H^2+\frac13H^3$ for a smooth cubic threefold $X$. Let $S$ denote a hyperplane section of $X$. The moduli space $\mathcal{M}_{\sigma}(w)$ parameterizes the following two objects:

(1) $\mathcal{O}_S(D)$, a reflexive sheaf or rank one associated with certain Weil divisor $D$ on $S$ (when $S$ is general, $D=L_1-L_2$ for a pair of skew lines $L_1,L_2$ on $S$);

(2) $I_{p|S}$, the ideal sheaf of a point in $S=X\cap H$, where $H$ is the tangent hyperplane section at $p$.

In both cases, the stable object is contained in a unique hyperplane section, so there is a natural projection
\begin{equation}
   \mathcal{M}_{\sigma}(w)\to (\mathbb P^4)^*. \label{MSigmaProjEqn}
\end{equation}

\begin{proposition}
The projection \eqref{MSigmaProjEqn} is generically finite, and its only positive dimensional fibers are elliptic curves corresponding to the Eckardt points on $X$.
\end{proposition}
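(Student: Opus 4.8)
The plan is to deduce the statement from the behaviour of the projection $\lambda:\textup{Bl}_0\Theta\to\mathbb O$ of diagram \eqref{AJblowupDiagram}, transported through the isomorphism $\mathcal{M}_{\sigma}(w)\cong\textup{Bl}_0\Theta$ of \cite{APR} and \cite{BBF+} recorded in Proposition \ref{IntroBridgeland}. Under this isomorphism the generic point $\mathcal{O}_S(L_1-L_2)$ of $\mathcal{M}_{\sigma}(w)$ corresponds to the Abel--Jacobi class $\psi(L_1,L_2)\in\Theta^{\circ}$, while the objects $I_{p|S}$ of type (2), which form the boundary divisor of $\mathcal{M}_{\sigma}(w)$, correspond to the exceptional divisor $K\cong Y$ of $\textup{Bl}_0\Theta$. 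Granting that \eqref{MSigmaProjEqn} agrees with $\lambda$ under this identification, the proposition is immediate: it was established in the course of proving Theorem \ref{barT'BlowupThm} that $\lambda$ is generically finite and that its only positive-dimensional fibres are the elliptic curves $C_i=\tilde\psi(E_i\times E_i)$, which are in bijection with the Eckardt points of $Y$.

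The heart of the argument is therefore the identification of the two projections to $\mathbb O$. Both are morphisms out of the normal irreducible variety $\textup{Bl}_0\Theta$, so it suffices to check that they agree on a dense open subset. On the locus of type (1) objects with $S$ general, \eqref{MSigmaProjEqn} sends $\mathcal{O}_S(L_1-L_2)$ to the hyperplane $\textup{Span}(L_1,L_2)$ cutting out $S$. On the other hand, $\lambda$ restricts over $\Theta^{\circ}$ to the Gauss map of the theta divisor, and the rational map $\Phi:F\times F\dashrightarrow\mathbb O$, $(L_1,L_2)\mapsto\textup{Span}(L_1,L_2)$, factors as $F\times F\xrightarrow{\psi}\Theta\dashrightarrow\mathbb O$ through precisely this Gauss map, as noted in the alternative proof of Corollary \ref{bijective}. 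Hence $\lambda(\psi(L_1,L_2))=\textup{Span}(L_1,L_2)$, so the two projections coincide on this dense open set and therefore everywhere. As a consistency check on the boundary, the type (2) object $I_{p|S}$ is sent by \eqref{MSigmaProjEqn} to the tangent hyperplane $T_pY$, which matches the restriction $\lambda|_K\colon Y\to Y^{\vee}$ being the dual map, as observed after the proof of Theorem \ref{barT'BlowupThm}.

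With the two projections identified, the proposition reduces to the statement about $\lambda$ recalled above. I expect the only genuine subtlety to lie in justifying the modular content of the isomorphism $\mathcal{M}_{\sigma}(w)\cong\textup{Bl}_0\Theta$ precisely enough to locate where each family of objects is sent --- namely that $\mathcal{O}_S(L_1-L_2)$ really corresponds to $\psi(L_1,L_2)$ and $I_{p|S}$ to the point $p$ of $K\cong Y$ --- rather than merely that an abstract isomorphism exists. This is where one must lean on the explicit construction in \cite{APR} (computing the second Chern class, hence the Abel--Jacobi class, of the stable objects) together with Proposition \ref{IntroBridgeland}, which identifies $\Psi$ with the blow-up Abel--Jacobi map \eqref{IntroBlowupAJEqn}. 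Once that compatibility is in hand, the remaining agreement of two morphisms on a dense open subset is routine, and the conclusion follows.
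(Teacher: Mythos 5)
Your proposal is correct and follows essentially the same route as the paper: transport the projection through the isomorphism $\mathcal{M}_{\sigma}(w)\cong\mathcal{M}_G(\kappa)\cong\textup{Bl}_0\Theta$, identify it with the Gauss map $\lambda$ by checking agreement at general points where $\mathcal{O}_S(L_1-L_2)\mapsto\textup{Span}(L_1,L_2)$, and then invoke Theorem \ref{barT'BlowupThm}. Your additional boundary consistency check and the remarks on the modular content of the isomorphism only make explicit what the paper leaves implicit.
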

\begin{proof}
It is shown in section 3.3 \cite{APR} that the $\mathcal{M}_{\sigma}(w)$ is isomorphic to the moduli space $\mathcal{M}_G(\kappa)$ of Gieseker-stable sheaves with Chern character $\kappa=(3,-H,\frac12H^2,-\frac16H^3)$ studied in \cite{BBF+}. According to Lemma 7.5 of \cite{BBF+}, there is an isomorphism $\mathcal{M}_G(\kappa)\cong \textup{Bl}_0(\Theta)$. Via the isomorphisms, the projection \eqref{MSigmaProjEqn} coincides with the extended Gauss map \eqref{AJblowupDiagram} $$\textup{Bl}_0(\Theta)\to (\mathbb P^4)^*,$$ 
since they agree on a general point. Now the argument follows from Corollary \ref{Cor_ExtendGauss}.
\end{proof}
\subsection{A Modular Interpretation of Extended Abel-Jacobi Map}
Recall that $\widetilde{H(X)}\cong \textup{Bl}_{\Delta_F}(F\times F)$ is a branched double cover of the Hilbert scheme of a pair of skew lines on $X$.  Denote $\widetilde{AJ}:\widetilde{H(X)}\to JX$ the composition of the blow-up map and $F\times F\to JX$ as in \eqref{diagram_blowupAJ}. For the Bridgeland moduli space $\mathcal{M}_{\sigma}(w)$, there is also an Abel-Jacobi map 
\begin{equation}\label{eqn_AJM_{sigma}}
    AJ: \mathcal{M}_{\sigma}(w)\to JX
\end{equation}
by sending $\mathcal{O}_S(D)$ to $\int_{D_0}^{D}$, for some fixed divisor $D_0$.

Regarding $\mathcal{M}_{\sigma}(w)$ as the blowup of the theta divisor, then according to  \cite[Proposition 2.1]{APR}, the exceptional divisor $X$ parameterizes ideal sheaves $I_{p|S}$ of singular points on hyperplane sections $S$ of $X$. The complement of exceptional divisor parameterizes coherent sheaves $\mathcal{O}_S(D)$ with $D$ being a certain Weil divisor on the hyperplane section $S$.

We propose the following modular interpretation of the extended Abel-Jacobi map \eqref{diagram_blowupAJ}.

\begin{proposition} \label{App_Prop_Msigma-factor}
The diagram \ref{diagram_blowupAJ} is identified with the following commutative diagram up to adding a constant.

\begin{figure}[ht]
    \centering
\begin{equation}\label{H(X)Msigma_diagram}
\begin{tikzcd}
\widetilde{H(X)}\arrow[dr, "\widetilde{AJ}"'] \arrow[r,"\psi"] &  \mathcal{M}_{\sigma}(w) \arrow[d,"AJ"]\\
  & JX
\end{tikzcd}
\end{equation}
\end{figure}
Suppose $(L_1,L_2)\in \widetilde{H(X)}$ is a pair of skew lines on $S=X\cap \textup{Span}(L_1,L_2)$ and disjoint from the singularities of $S$, then $\psi$ sends $(L_1,L_2)$ to $\mathcal{O}_S(L_1-L_2)$, where $S$ is the hyperplane section defined by $\textup{Span}(L_1,L_2)$. $\psi$ sends a scheme of type (II) or (IV) to the ideal sheaf $I_p$, where $p$ is the unique point determined by $Z$ defined in Corollary \ref{Cor_Q1Answer}. 
\end{proposition}

\begin{proof}
The map $\psi: (L_1,L_2)\mapsto \mathcal{O}_S(L_1-L_2)$ is generically 6-to-1 and agrees with $\tilde{\Psi}$, because on a smooth hyperplane section, $[L_1]-[L_2]$ is a vanishing cycle and can be represented by the difference of two skew lines in six ways. In particular, $\psi$ extends to a morphism and coincides with $\tilde{\Psi}$. The Abel-Jacobi map $\widetilde{AJ}$ factor through $\psi$ by definition up to translating by an image of $D_0$. 

The rest of the claim is due to the knowledge of stable objects in $\mathcal{M}_{\sigma}(w)$. If $L_1$ and $L_2$ are skew and disjoint from singularities of $S$, then the intersection pairing on $S$ for $L_1$ and $L_2$ are the same as the smooth case. In particular, $L_1-L_2$ has self-intersection number $-2$ and $\mathcal{O}_S(L_1-L_2)\in \mathcal{M}_{\sigma}(w)$.
\end{proof}

In general, it is not explicitly known what the divisor $D$ is on a singular hyperplane section $S$ such that $\mathcal{O}_S(D)\in \mathcal{M}_{\sigma}(w)$. This prevents us from claiming Proposition \ref{App_Prop_Msigma-factor} for type (I) scheme passing through singularities and the type (III) (cf. Proposition \ref{III_sur_prop}). In other words, it is not known what the representative of the flat limit of $\mathcal{O}_S(L_1-L_2)$ as two skew lines specialize to intersect at one point, or when the hyperplane section becomes singular and at least one line pass through the singularity.

We conjecture the modular interpretation of $\psi$ in Proposition \ref{App_Prop_Msigma-factor} holds everywhere on $\widetilde{H(X)}$:

\begin{conjecture}\label{Conj_typeIII}
Let $p\in \mathcal{M}_{\sigma}(w)$ be a closed point not on the exceptional divisor,  then it parameterizes stable object of the form $\mathcal{O}_S(L_1-L_2)$, where $L_1, L_2$ are two lines on $S$ (can be singular) and lie in one of the two cases:

(i) $L_1, L_2$ disjoint (and may pass through singularities);

(ii) $L_1$ and $L_2$ intersect at one point $p$, which is a singularity of $p$. 

\end{conjecture}

Note that if the Conjecture \ref{Conj_typeIII} holds, we can classify all stable objects in $\mathcal{M}_{\sigma}(w)$. The proof requires careful study of the intersection pairing of Weil divisors on the normal cubic surfaces \cite{Sakai}. We hope to study it in future research.

\subsection{A Question in Cubic Fourfold}
Let $Y$ be a cubic fourfold. Let $H(Y)$ be the component of Hilbert scheme of $Y$ that contains a pair of skew lines. Our next goal is to

\begin{problem}
Characterize the Hilbert scheme of a pair of skew lines $H(Y)$ of the cubic fourfold.
\end{problem}

We suspect that $H(Y)$ is also smooth and arises from $\textup{Sym}^2F(Y)$ by a two-step successive blow-up, where $F(Y)$ be the Fano variety of lines of $Y$. If this is true, this allows us to study a hyperkahler 8-fold associated with $Y$. 

According to \cite{LLSvS}, if $Y$ does not contain a plane, there is a hyperkahler 8-fold $W$ of $\textup{K3}^{[n]}$-type associated with $Y$ arising from Hilbert schemes $\mathcal{C}_Y$ of twisted cubics. In fact, there is two-step contraction $\mathcal{C}_Y\xrightarrow{p_1} W'\xrightarrow{p_2} W$ where $p_1$ is a $\mathbb P^2$ bundle and $p_2$ is the blow-up of a 4-dimensional subvariety of $W$ which is isomorphic to $Y$. 

Suppose we solved the problem as predicted. Let $\widetilde{H(Y)}$ be the natural double cover of $H(Y)$ by giving an order to a pair of skew lines. Then there is a dominant rational map of degree 6
$$\phi:\widetilde{H(Y)}\dashrightarrow W$$
extending Voisin's rational map \cite[Prop. 4.8]{VoisinRatMap} whose restriction to a general hyperplane section $X$ extends to the morphism \eqref{diagram_blowupAJ}
$$\textup{Bl}_{\Delta}(F\times F)\to \textup{Bl}_0(\Theta),$$
which arises from extending the Abel-Jacobi map of the cubic threefold $X$. It turns out that $\textup{Bl}_0(\Theta)$ is a Lagrangian subvariety of the hyperkahler variety $W$ (cf. \cite{Evgeny}).  So by deforming the hyperplane section $X$, this becomes a Lagrangian cover of $W$ in the sense of \cite{Voisin22_Lagrangian}.

Recently, Flapan, Macri, O'Grady, and Sacca found a different Lagrangian of $W$ and is of general type \cite{FMOS_I,FMOS_II}, which arises as a component of the fixed locus of an antisymplectic involution $\tau:W\to W$ that is defined lattice-theoretically using the Beauville–Bogomolov–Fujiki quadratic form and the Torelli type theorem lifting to the variety $Z$. In fact, they find the fixed locus $Fix(\tau)$ has two components $Y_1$ and $Y_2$, with $Y_1\cong Y$ isomorphic to the cubic fourfold, and $Y_2$ is of the general type. According to Beauville, these are Lagrangian subvarieties. O'Grady conjectured the following.

\begin{conjecture} (O'Grady) the Lagrangian subvariety $Y_2$ of the general type deforms in a family that forms a Lagrangian cover of $W$.   
\end{conjecture}

From our perspective, the involution $\tau$ is invariant on each  $\textup{Bl}_0(\Theta)$ supported on each hyperplane section, and the involution is induced from $JX\to JX$ by sending $x\mapsto -x$. So in particular, the fixed locus consists of those 2-torsion points that lie on theta divisor of a general hyperplane section $X$ of $Y$. Over zero point $0\in \Theta$ of $JX$, the exceptional divisor is isomorphic to $X$ \cite{Beauville}, and piles up to the Fano component $Y_1\cong Y$ as hyperplane deforms. The nonzero 2-torsion points supported on $\Theta$ should pile up to the general type component $Y_2$. We want to understand from the perspective of the map $\phi$, how to understand the component $Y_2$. Indeed, the involution lifts to $\tilde{\tau}$ on $\widetilde{H(Y)}$ by permitting the order of two skew lines. In particular, the fixed locus of $\tilde{\tau}$ are the subschemes of $Y$ of type II and IV that are generically nonreduced and their image is $Y_1$. The preimage $\tilde{Y}_2$ of $Y_2$ on the other hand, is not fixed by $\tilde{\tau}$. We want to understand the 4-dimensional subvariety $\tilde{Y}_2$ of $\widetilde{H(Y)}$, and think about if there is a geometric reason it can be deformed.

\bibliographystyle{alpha}
\bibliography{bibfile}

\end{document}